\documentclass[12pt]{article}
\usepackage{geometry}
\usepackage[T1]{fontenc}
\usepackage{amsmath}
\usepackage{amsfonts}
\usepackage{amssymb}
\usepackage{mathtools}
\usepackage{calrsfs}
\usepackage[shortlabels]{enumitem}
\usepackage{amsthm}
\usepackage{tikz}
\usepackage{tikz-cd}
\usepackage{hyperref}
\usepackage{quiver}
\usepackage{comment}
\usepackage{stmaryrd}
\usepackage[natbib, style=alphabetic, doi=false,isbn=false,url=false,eprint=false]{biblatex}

\usetikzlibrary{decorations.pathreplacing}

\usepackage{ytableau}

\addbibresource{bibliography.bib}

\author{MATEUSZ LOWIEL}
\title{\textbf{QUIVER GRASSMANNIANS ASSOCIATED TO NILPOTENT CYCLIC REPRESENTATIONS DEFINED BY SINGLE MATRIX}}
\date{}
\DeclareMathAlphabet{\pazocal}{OMS}{zplm}{m}{n}

\newcommand{\proj}[2]{\mathbb{P}_{#1}^{#2}}

\newcommand{\qgr}[2]{\text{Gr}_{\mathbf{#1}}(#2)}
\newcommand{\clBB}[1]{\overline{\mathcal{C}_{#1}}}

\newenvironment{manualtheorem}[1]{%
	\manualtheoreminner
}{\endmanualtheoreminner}

\newenvironment{manualprop}[1]{%
	\manualpropinner
}{\endmanualpropinner}

\theoremstyle{plain}
\newtheorem{notation}[subsection]{Notation}
\newtheorem{defi}[subsection]{Definition}
\newtheorem{stw}[subsection]{Proposition}
\newtheorem{tw}[subsection]{Theorem}
\newtheorem{obs}[subsection]{Observation}

\theoremstyle{remark}
\newtheorem*{uw}{Remark}
\newtheorem{przyk}[subsection]{Example}

\begin{document}
	\maketitle
	\begin{abstract}
		\noindent In the present paper we study the geometry of the closed Białynicki-Birula cells of the quiver Grassmannians associated to a nilpotent representation of a cyclic quiver defined by a single matrix. For the special case, where we choose subrepresentations of dimension $\mathbf{1}=(1,\dots,1)$, the main result of this paper is that the closed Białynicki-Birula cells are smooth. We also discuss the multiplicative structure of the cohomology ring of such spaces. Namely, we describe the so-called Knutson-Tao basis in context of the basis of equivariant cohomology that is dual to fundamental classes in equivariant homology.
	\end{abstract}
	\tableofcontents
	\section*{Introduction}
	The theory of quiver Grassmannians, first introduced by Schofield in \cite{GeneralRepresentationsOfQuivers} has its roots in representation theory, but it found its way to other fields, such as algebraic geometry. This theory, from the perspective of algebraic geometry, is a rich one -- we know that these spaces describe all projective varieties as was shown in \cite{EveryProjectiveVarietyIsQuiverGrassmannian}, so they are an attractive object to study. For acyclic quivers many properties such as smoothness, irreducibility or dimension have known criteria, for more information one can consult \cite{3LecturesOnQuiverGrassmannians}. 
	
	More recently, M. Lanini and A. P{\"u}tz in \cite{GKMTheoryForCyclicQuiverGrassmannians} define torus actions on the quiver Grassmannians associated to the so-called nilpotent representations of cyclic quivers, which endows the quiver Grassmannians with a GKM-variety structure and so lets us compute equivariant cohomology of these spaces. These spaces were further analyzed in  \cite{PermutationActionsOnQuiverGrassmannians}, \cite{DesingularizationsOfCyclicQuiverGrassmannians} in context of group actions and resolution of singularities respectively.
	
	This paper delves into the GKM-variety structure and the Białynicki-Birula decomposition of these quiver Grassmannians in a special case. Let $\Delta_n$ be the equioriented cycle with $n$ vertices and $\mathcal{M}$ be a $\Delta_n$-representation that has the same matrix $J:\mathbb{C}^N\to\mathbb{C}^N$ over each arrow in $\Delta_n$. In this paper we study the quiver Grassmannian $X=\qgr{1}{\mathcal{M}}\subseteq\prod_{i=1}^n\proj{}{N-1}$ of subrepresentations of dimension vector $\mathbf{1}\in\mathbb{Z}^{n}$. Without loss of generality $J$ can be assumed to be a Jordan matrix with $M$ Jordan blocks. Following \cite{GKMTheoryForCyclicQuiverGrassmannians} we introduce a torus action on $X$ and study its $T$-equivariant cohomology. Authors of \cite{TotallyNonnegativeGrassmanniansGrassmannNecklacesAndQuiverGrassmannians} tackled a similar problem -- authors considered the quiver Grassmannian $\qgr{k}{\mathcal{M}}$, where $\mathbf{k}=(k,\dots,k)$, but $\mathcal{M}$ was defined by $J$ which is a single Jordan block of size $n$. In that paper one can find, for example, the Poincar{\'e} polynomials of such quiver Grassmannians, resolution of singularities and the connection between these quiver Grassmannians and totally nonnegative Grassmannians. 
	
	The Białynicki-Birula cells form a stratification of the quiver Grassmannian into affine cells, as was shown in \cite{GKMTheoryForCyclicQuiverGrassmannians}. In our case  we prove the following two results concerning the structure of the closed Białynicki-Birula cells
	\begin{manualtheorem}{$\ref{ClBBCellsSmooth}$}
		Let $p\in X^{T}$ be a fixed point. Then the closure of the BB-cell $\mathcal{C}_p$ is a product of smooth quiver Grassmannians.
	\end{manualtheorem}
	The nilpotent Jordan matrix $J$ corresponds to a partition of $N$ and so we can visualize coordinates of $\mathbb{C}^N$ using the Young tableau $\mathcal{J}$ of shape associated to $J$. Appropriately identifying the basis vectors of $\mathbb{C}^N$ with boxes of $\mathcal{J}$ leads to the definition of twisted lexicographic order on this basis. More precisely, suppose that $J$ has $M$ blocks and the $l$-th block is of size $j_l$. Let $v_i^l$ be the $i$-th Jordan basis vector of the $l$-th block of $J$. We then set $v_i^l<v_k^s$ if either $j_s-k<j_l-i$ or $j_s-k=j_l-i$ and $l<k$. The closed BB-cells form a poset with respect to inclusion and we prove that these two relations are inverse to each other
	\begin{manualtheorem}{$\ref{OrderOfBBCells}$}
		For two fixed points $p,q\in X^T$ we have $\clBB{p}\subseteq\clBB{q}$ if and only if $q_i\le p_i$ for every $i\in\mathbb{Z}/n\mathbb{Z}$.
	\end{manualtheorem}
	We can use the visual nature of coordinates to visualize possible edges in the GKM-graph of $X$. Namely, each fixed point $p\in X^T$ consists of choosing the basis vectors $v_i^l$ over each vertex. Thus, we can divide $p$ into movable parts, i.e. segments of these lines in $\mathbb{C}^N$ that do not go to $0$ under $J$. Each movable part can be seen as a coloring of first couple of boxes in a fixed row in $\mathcal{J}$ and a mutation of movable part is defined to be either a coloring of the same amount of boxes, but in a lower row or two colorings -- we split the first coloring into two. We then show that these mutations define edges in the GKM-graph of $X$ in the following Proposition
	\begin{manualprop}{$\ref{GKM}$}
		Let $p,q\in X^T$ be two fixed points. Then an arrow $p\to q$ in the GKM graph of $X$ exists if and only if $q$ is obtained from $p$ by a mutation of exactly one movable part in $p$.
	\end{manualprop}
	We also give a way to get the label of such an edge in Proposition $\ref{GKMedges}$.
	
	The closed Białynicki-Birula cells from a basis of equivariant homology and since they are smooth, integration along these cells can be done using the Atiyah-Bott, Berline-Vergne, which heavily simplifies actual computations. We can construct the dual basis $\{p^x\in H_T^*(X):x\in X^T\}$ and this basis is immediately seen to be a Knutson-Tao basis of $H_T^*(X)$, which were defined in \cite{PermutationRepresentationsOnSchubertVarieties}. Again, as the cells are smooth, then computing things such as structure constants $c_{x,y}^z$, defined by
	\begin{equation*}
		p^x\cdot p^y = \sum_{z\in X^T}c_{x,y}^zp^z
	\end{equation*}
	is immediate, since this comes down to linear algebra (albeit linear algebra over large vector spaces).
	
	This paper is constructed as follows -- Chapter $\ref{BasicDefinitions}$ is meant to fix notation and recall some very basic notions regarding the quiver Grasssmannians and nilpotent representations. Chapter $\ref{TorusActions}$ recalls the construction of actions of two tori on quiver Grassmannians for nilpotent $\Delta_n$-representations from \cite{GKMTheoryForCyclicQuiverGrassmannians}, but only for our case. Chapter $\ref{BBDecomposition}$ is about the Białynicki-Birula decomposition of this particular family of quiver Grassmannians. Namely, we recall a result from \cite{GKMTheoryForCyclicQuiverGrassmannians}, that the induced Białynicki-Birula decomposition is a cellular decomposition into affine cells. We further investigate how these cells look like after taking the closure in the ambient space of the quiver Grassmannian, which is a product of projective spaces. This chapter proves that the closed Białynicki-Birula cells are smooth. In Chapter $\ref{GKMStructure}$ we describe the GKM graph of these quiver Grassmannians, which was also computed in \cite{GKMTheoryForCyclicQuiverGrassmannians}, but we provide a description using colorings of Young tableaux. In Chapter $\ref{MultiplicativeStructure}$ we define the basis dual to the equivariant fundamental classes in equivariant homology and discuss computing structure constants of this basis.
	\section{Basic definitions and notation}\label{BasicDefinitions}
	Let $T=(\mathbb{C}^\times)^r$ be an algebraic torus. For a $T$-space $X$ we define its equivariant cohomology via the Borel construction. We denote by $H_T^*(X)$ the equivariant cohomology ring with coefficients in $\mathbb{Q}$ (for a proper introduction into equivariant cohomology see \cite{EquivariantCohomologyInAlgebraicGeometry}). 
	
	Now suppose that $X$ is a $\mathbb{C}^\times$-space and we decompose the fixed point set into connected components as follows
	\begin{equation*}
		X^{\mathbb{C}^\times} = \bigcup_{k\ge0} X_k,
	\end{equation*}
	then we define the \textbf{Białynicki-Birula cells} or the \textbf{BB-cells} as
	\begin{equation*}
		\mathcal{C}_k = \{x\in X:\lim_{\lambda\to0}\lambda\cdot x\in X_k\}.
	\end{equation*}
	Together these cells form the \textbf{BB-decomposition} of $X$. If these cells are affine spaces, then $X$ is an equivariantly formal $\mathbb{C}^\times$-variety.
	
	Let $Q=(Q_0,Q_1)$ be a finite and connected quiver and $M=(M_i,M_{\alpha})_{i\in Q_0,\alpha\in Q_1}$ be a representation of $Q$ over $\mathbb{C}$ (equivalently, a $\mathbb{C}Q$-module). We denote by ${\dim M\in\mathbb{Z}_{\ge0}^{|Q_0|}}$ the dimension vector of $M$, i.e. $(\dim M)_{i}=\dim_{\mathbb{C}} M_i$ (for a proper introduction to representations of quivers one can consult \cite{ElementsOfTheRepresentationTheoryOfAssociativeAlgebras}). For a fixed vector $\mathbf{e}\le \dim M$ we denote by $\qgr{e}{M}$ the quiver Grassmannian of subrepresentations of $M$ of dimension $\mathbf{e}$. These spaces are known to have a (projective) scheme structure given by their embedding into a product of usual Grassmannians, since a subrepresentation of $M$ is determined by a choice of subspaces for all $M_i$'s. Note that these spaces in general are not varieties as they need not be irreducible or even reduced. \\
	
	Let $M\in\text{rep}(Q)$ be a representation and $B$ be the set of basis elements of each vector space in $M$. We divide $B$ into subsets corresponding to each vertex -- let $B^i\subset B$ be the subset of $B$ that is the basis of $M_i$. \textbf{The coefficient quiver of $M$} with respect to the basis $B$ is defined as the following quiver: 
	\begin{enumerate}
		\item The set of vertices is $B$.
		\item For $v_k^i\in B^i, v_l^j\in B^j$ an arrow $v_k^i\to v_l^j$ exists if and only if there exists an arrow $\alpha:i\to j$ such that the $l$-th coordinate of $M_\alpha(v_k^i)$ in $B^j$ is non-zero.
	\end{enumerate}
	We denote this coefficient quiver by $Q(M,B)$. This tool was used to find a well-suited tori actions on quiver Grassmannians, for example in \cite{GKMTheoryForCyclicQuiverGrassmannians} and this paper is following this construction. \\
	
	In this paper we will be interested in a special family of representations of a certain family of quivers. Namely, let $\Delta_n$ be the oriented cycle quiver, that is an equioriented $n$-gon. We label both the vertices and edges using the set $\mathbb{Z}/n\mathbb{Z}$. 
	
	A $\Delta_n$-representation $M$ is called \textbf{nilpotent} if there exists a number $N>0$ such that $M$, considered as a $\mathbb{C}\Delta_n$-module, is annihilated by the ideal of paths of length at least $N$. Explicitly, this means that for any $i\in Q_0$ the composition of $N$ consecutive maps from $M$ vanishes.
	
	\begin{przyk}\label{MainObject}
		A basic example of a nilpotent representation is a representation that has the same vector space, say of dimension $N$, over each vertex and the same nilpotent map over each arrow. Since we can simultaneously change basis, we can without loss of generality assume that over each vertex we have $\mathbb{C}^N$ and our map is a Jordan matrix $J:\mathbb{C}^N\to\mathbb{C}^N$, whose Jordan-block decomposition is given by
		\begin{equation*}
			J = J_1\oplus \dots\oplus J_{M}, \text{ where } J_i\in M_{j_i\times j_i}(\{0,1\}), j_1\ge j_2\ge\dots\ge j_M.
		\end{equation*}
		As $J$ must be nilpotent, each Jordan block has zeroes on the diagonal. We will denote this representation by $\mathcal{M}=(\mathcal{M}_i=\mathbb{C}^N,\mathcal{M}_\alpha=J)_{i,\alpha\in\mathbb{Z}/n\mathbb{Z}}$.
	\end{przyk}
	Following \cite{GKMTheoryForCyclicQuiverGrassmannians} we can construct a GKM-variety structure on any quiver Grassmannian associated to any nilpotent $\Delta_n$-representation and we will use this result to study the geometry of $\qgr{1}{\mathcal{M}}$, where $\mathbf{1}=(1,\dots,1)$. \\
	
	Throughout this paper we will talk about subspaces defined as a linear span of some basis vector and we will shorthand $\text{span}(v_1,\dots,v_n)$ by $\langle v_1,\dots,v_n\rangle$. We denote the set $\{1,\dots,M\}$ by $[M]$. 
	\section{Torus actions on the main object}\label{TorusActions}
	In this section we recall known results from \cite{GKMTheoryForCyclicQuiverGrassmannians} applied to our example of $X:=\qgr{1}{\mathcal{M}}$, where $\mathcal{M}$ is the representaion defined in Example ${MainObject}$ and $\mathbf{1}=(1,\dots,1)$. This space clearly embeds into $\mathbb{P}:=(\mathbb{P}^{N-1})^n$ and authors of \cite{GKMTheoryForCyclicQuiverGrassmannians} in fact define actions of two tori on $\mathbb{P}$ that fixes $X$. 
	
	Let us start with the $\mathbb{C}^\times$-action. Fix $l\in\{1,\dots,M\}$ and let $V_l\subseteq\mathbb{C}^N$ be the subspace fixed by the $l$-th Jordan block in $J$. We denote the Jordan basis of $V_l$ by $v_1^l,\dots,v_{j_l}^l$. Consider a linear action of $\mathbb{C}^\times$ on $\mathbb{C}^N$, defined on basis vectors $v_i^l\in\mathbb{C}^N$ as follows
	\begin{equation}\label{C-action}
		\lambda\cdot v_i^l := \lambda^{l-M(j_l-i)}v_i^l, \text{ for }\lambda\in\mathbb{C}^\times.
	\end{equation}
	In other words, the weight of $v_{j_l}^l$ is $l$ and as we decrease the index of the basis vector we decrease the weight by $M$ (so long as the vectors lie in the same $V_l$). Note that we act on each coordinate by different weights. After we pass to the projective space $\proj{}{N-1}$ the fixed point set under the induced torus action is the set of (linear spans of) standard basis vectors. We extend this action to the diagonal action on $\mathbb{P}$. It requires a check, that $X$ is $\mathbb{C}^\times$-invariant under this action and we refer the reader to \cite[Lemma 1.1]{QuiverGrassmanniansAssociatedWithStringModules} for the proof.
	
	Next, we describe an action of $T=(\mathbb{C}^\times)^{nM+1}$ on $\mathbb{P}$ that fixes $X$, but omit describing the whole construction here and state the resulting torus action; readers interested in where this action comes from are referred to \cite[Subsection 5.2]{GKMTheoryForCyclicQuiverGrassmannians}. We start with the $0$-th vertex and define the following torus action on basis vectors
	\begin{equation*}\label{T-action}
		(t_0,\{t_{r,s}\}_{r\in\mathbb{Z}/n\mathbb{Z},s\in[M]})\cdot v_{i}^l := t_0^{i-1}t_{j_l-i,l}v_i^l, \text{ for }(t_0,\{t_{r,s}\}_{r\in\mathbb{Z}/n\mathbb{Z},s\in[M]})\in T.
	\end{equation*}
	This naturally extends to $\proj{}{N-1}$ and to extend this to $\mathbb{P}$ we use an automorphism $\rho:T\to T$, which we define as follows
	\begin{align}\label{Rotation}
		\rho(t_0):=t_0, && \rho(t_{j,k}) := t_{j+1,k}.
	\end{align}
	Now, if $v_i^l$ lies over the $j$-th vertex, then we set
	\begin{equation*}
		(t_0,\{t_{r,s}\}_{r\in\mathbb{Z}/n\mathbb{Z},s\in[M]})\cdot v_i^l := \rho^j(t_0,\{t_{r,s}\}_{r\in\mathbb{Z}/n\mathbb{Z},s\in[M]})\cdot v_i^l=t_0^{i-1}t_{j_l-i+j,l}v_i^l.
	\end{equation*}
	This torus action makes $X$ a $T$-equivariant subvariety of its ambient space, by \cite[Lemma 5.10]{GKMTheoryForCyclicQuiverGrassmannians}. 
	
	\begin{przyk}
		Consider the case of a Jordan matrix with two blocks $J=J_3\oplus J_2$ and $3$ vertices. Then the $\mathbb{C}^\times$-action over each vertex is given by
		\begin{equation*}
			\lambda\cdot (a_1^1,a_2^1,a_3^1,a_1^2,a_2^2) = (\lambda^{-2} a_1^1,\lambda^{-1} a_2^1,\lambda^1 a_3^1, a_1^2,\lambda^2 a_2^2)
		\end{equation*}
		and the $T=(\mathbb{C}^\times)^{7}$-action over the zeroth, first and second vertex is given by
		\begin{align*}
			& t\cdot (a_1^1,a_2^1,a_3^1,a_1^2,a_2^2) = (t_{2,1}a_1^1,t_0t_{1,1}a_2^1,t_0^2t_{0,1}a_3^1,t_{1,2}a_1^2,t_{0,2}a_2^2),\\
			& t\cdot(a_1^1,a_2^1,a_3^1,a_1^2,a_2^2) = (t_{0,1}a_1^1,t_0t_{2,1}a_2^1,t_0^2t_{1,1}a_3^1,t_{2,2}a_1^2,t_{1,2}a_2^2),\\
			& t\cdot(a_1^1,a_2^1,a_3^1,a_1^2,a_2^2) = (t_{1,1}a_1^1,t_0t_{0,1}a_2^1,t_0^2t_{2,1}a_3^1,t_{0,2}a_1^2,t_{2,2}a_2^2).
		\end{align*}
	\end{przyk}
	
	There is a beautiful interplay between these two actions -- the $\mathbb{C}^\times$-action induces a BB-decomposition of $X$ that turns out to be a cellular decomposition by \cite[Theorem 5.6]{GKMTheoryForCyclicQuiverGrassmannians}, which implies that $X$ is equivariantly formal. The fixed point sets $X^{\mathbb{C}^\times}$ and $X^T$ are equal and the $\mathbb{C}^\times$-action can be recovered from the $T$-action via a cocharacter $\chi:\mathbb{C}^\times\to T$ defined by
	\begin{equation*}
		\chi(t) := (t^M,\{t^{k-Mj_k+M}\}_{j\in\mathbb{Z}/n\mathbb{Z},k\in[M]}).
	\end{equation*}
	The $T$-variety $X$ is a GKM-variety by \cite[Theorem 6.5]{GKMTheoryForCyclicQuiverGrassmannians} and its GKM graph can be described using the coefficient quiver of $\mathcal{M}$, but in our specific case we can give descriptions of the BB-cells and the GKM of $X$ in a simpler language.
	\begin{notation}\label{CoefficientQuiver}
		After fixing the basis of the vector space $\mathcal{M}$ to be the sets $\{v_i^l\}$ over every vertex, the coefficient quiver of $\mathcal{M}$ with respect to this basis can be drawn as $M$ spirals starting at each vertex of $\Delta_n$ and the $l$-th spiral spans $j_l$ vertices. We adopt the notation from \cite{GKMTheoryForCyclicQuiverGrassmannians} and denote this quiver by $Q(\mathcal{M})$. 
	\end{notation}
	\begin{przyk}
		Take for example $n=3$ and $J=J_3$ to be a single Jordan block of size $3\times 3$. Then the coefficient quiver $Q(\mathcal{M})$ looks as follows
		\[\resizebox{0.7\textwidth}{!}{
			\begin{tikzcd}[ampersand replacement=\&]
			\&\&\& v_1^1 \\
			\&\&\& v_2^1 \\
			\&\&\& v_3^1 \\
			\\
			\&\& v_3^1 \&\&\& v_3^1 \\
			\& v_2^1 \&\&\&\&\& v_2^1 \\
			v_1^1 \&\&\&\&\&\&\& v_1^1
			\arrow[from=1-4, to=6-7]
			\arrow[from=6-7, to=5-3]
			\arrow[from=7-1, to=2-4]
			\arrow[from=2-4, to=5-6]
			\arrow[from=7-8, to=6-2]
			\arrow[from=6-2, to=3-4]
		\end{tikzcd}
	}\]
	\end{przyk}
	Authors of \cite{GKMTheoryForCyclicQuiverGrassmannians} describe the GKM graph of $X$ in \cite[Theorem 6.13]{GKMTheoryForCyclicQuiverGrassmannians}. In later chapters we will remark how the description of the GKM graph of $X$ comes from the one given in \cite{GKMTheoryForCyclicQuiverGrassmannians}.
	\section{Białynicki-Birula decomposition}\label{BBDecomposition}
	Here we will focus on the $\mathbb{C}^\times$-action on $X$, given by Formula $\eqref{C-action}$ and the induced BB-decomposition of $X$. We start with the known description of the fixed point set.
	\begin{stw}[{\cite[Theorem 1, Proposition 1]{QuiverGrassmanniansAssociatedWithStringModules}}]\label{FixedPoints}	
	Fixed points of $X$ under both tori actions consists of representations that over all vertices have vector spaces spanned by the fixed basis $\{v_i^l:i=1,\dots,j_l,l=1,\dots,M\}$. These subrepresentations are in bijective correspondence with successor closed subquivers $S\subseteq Q(\mathcal{M})$, where $Q(\mathcal{M})$ is the coefficient quiver of $\mathcal{M}$ defined in Notation $\ref{CoefficientQuiver}$, such that $S$, restricted to vertices corresponding to a fixed vertex, has exactly one vertex.
	\end{stw}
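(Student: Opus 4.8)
The plan is to split the statement into two steps: first identify the fixed locus of the ambient product $\mathbb{P}=(\mathbb{P}^{N-1})^{n}$, and then cut out of it the points lying in $X$ by translating the subrepresentation condition into a condition on $Q(\mathcal{M})$. For the first step, observe that the $\mathbb{C}^{\times}$-action of \eqref{C-action} is diagonal on each factor $\mathbb{P}^{N-1}$ in the fixed basis $\{v_i^l\}$, with $v_i^l$ of weight $l-M(j_l-i)$. These weights are pairwise distinct: an equality $l-M(j_l-i)=l'-M(j_{l'}-i')$ gives $l-l'\equiv 0\pmod M$, hence $l=l'$ since $l,l'\in[M]$, and then $i=i'$. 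A diagonal $\mathbb{C}^{\times}$-action on $\mathbb{P}^{N-1}$ with pairwise distinct weights has precisely the $N$ coordinate points $[v_i^l]$ as fixed points (a projective point with two nonzero coordinates of distinct weights is not fixed), so $\mathbb{P}^{\mathbb{C}^{\times}}$ is exactly the set of $n$-tuples of coordinate points. The same reasoning applies to the $T$-action, where distinctness is even clearer: the $T$-weight of the copy of $v_i^l$ over the $j$-th vertex is $t_0^{\,i-1}t_{j_l-i+j,\,l}$, and the appearing variable $t_{j_l-i+j,\,l}$ together with the exponent $i-1$ already recovers the pair $(i,l)$; hence $\mathbb{P}^{T}=\mathbb{P}^{\mathbb{C}^{\times}}$. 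Since $X$ is a closed subvariety invariant under both actions (Section~\ref{TorusActions}), we conclude $X^{\mathbb{C}^{\times}}=X\cap\mathbb{P}^{\mathbb{C}^{\times}}=X\cap\mathbb{P}^{T}=X^{T}$, and every one of its points is a tuple of coordinate lines, which is the first assertion.

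For the second step, a tuple $([w_0],\dots,[w_{n-1}])$ with each $w_j\in\{v_i^l\}$ lies in $X$ if and only if $U=(\langle w_j\rangle)_{j\in\mathbb{Z}/n\mathbb{Z}}$ is a subrepresentation of $\mathcal{M}$, that is, if and only if $J\langle w_j\rangle\subseteq\langle w_{j+1}\rangle$ for every $j$ (over the cyclic quiver $\Delta_n$ these inclusions over the $n$ arrows are exactly the $\mathbb{C}\Delta_n$-submodule conditions). Since $J$ acts on the fixed basis by $v_i^l\mapsto v_{i+1}^l$ for $i<j_l$ and $v_{j_l}^l\mapsto 0$, the only arrow of $Q(\mathcal{M})$ leaving the copy of $v_i^l$ over vertex $j$ is the one to the copy of $v_{i+1}^l$ over vertex $j+1$ when $i<j_l$, and there is no such arrow when $i=j_l$. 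Thus $J\langle w_j\rangle\subseteq\langle w_{j+1}\rangle$ says precisely that whenever the vertex of $Q(\mathcal{M})$ chosen over vertex $j$ has a successor, that successor is the vertex chosen over vertex $j+1$. Letting $j$ range over $\mathbb{Z}/n\mathbb{Z}$, this is exactly the statement that $S:=\{\,w_j \text{ over vertex } j : j\in\mathbb{Z}/n\mathbb{Z}\,\}$ is a successor-closed subset of $Q(\mathcal{M})$, and $S$ meets the fibre over each vertex of $\Delta_n$ in exactly one vertex by construction. Conversely, any successor-closed $S\subseteq Q(\mathcal{M})$ with one vertex over each vertex of $\Delta_n$ defines, by the same equivalence, a subrepresentation that is one-dimensional over every vertex, i.e.\ a point of $X^{T}$; the two assignments $U\mapsto S$ and $S\mapsto U$ are mutually inverse, which gives the claimed bijection.

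I do not expect a genuine obstacle here; the points needing care are purely bookkeeping. The first is the pairwise distinctness of the $\mathbb{C}^{\times}$-weights, the small modular computation above, since this is what forces $\mathbb{P}^{\mathbb{C}^{\times}}$ to be as small as claimed; without it one would have to analyse fixed points inside positive-dimensional fixed loci in the factors. The second is keeping the cyclic/spiral indexing of $Q(\mathcal{M})$ straight in the second step, in particular noting that the copies of $v_{j_l}^l$ are sinks of $Q(\mathcal{M})$, so successor-closedness imposes nothing there, matching the fact that $J$ annihilates the top of each Jordan chain. Once the $\mathbb{C}^{\times}$-statement is established the $T$-statement also follows formally via the cocharacter $\chi$, and everything else is the standard identification of points of a quiver Grassmannian with tuples of subspaces closed under the structure maps.
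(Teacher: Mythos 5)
Your argument is correct: the pairwise-distinctness of the $\mathbb{C}^\times$- and $T$-weights forces the fixed locus of the ambient $(\mathbb{P}^{N-1})^n$ to consist of tuples of coordinate lines, and the condition $J\langle w_j\rangle\subseteq\langle w_{j+1}\rangle$ translates exactly into successor-closedness of the corresponding one-vertex-per-fibre subquiver of $Q(\mathcal{M})$. The paper itself gives no proof of this proposition, citing it from \cite{QuiverGrassmanniansAssociatedWithStringModules}; your write-up is essentially the standard argument found there, so there is nothing further to compare.
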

	In particular any fixed point $p\in X^T$, considered as a subquiver of $Q(\mathcal{M})$ decomposes into connected components. 
	\begin{defi}\label{MovablePart}
		Let $p\in X^T$ be a fixed point and $S_p\subseteq Q(\mathcal{M})$ be the subquiver defined by $p$. Then $S_p$ decomposes into connected succesor-closed subquivers
		\begin{equation*}
			S_p=(\sigma_1,\sigma_2,\dots,\sigma_k),
		\end{equation*}
		where $\sigma_i\subseteq S_p$ are the maximal connected subquivers of $S_p$. Each $\sigma_i$ is called a \textbf{movable part in $p$} and any such $\sigma_i$ defines a representation of the $A$-type equioriented quiver representation
		\begin{equation*}
			\sigma_i \rightsquigarrow (\langle v_{j_l-m}\rangle,\langle v_{j_l-m+1}\rangle,\dots,\langle v_{j_l}\rangle)
		\end{equation*}
		for some $m\ge 0$ and $l=1,\dots,M$. We shall say that $\sigma_i$ is thus of length $m$ and comes from the $l$-th block. To fully define $\sigma_i$ we also need either a starting vertex or the terminal vertex, since $\sigma_i$ corresponds to a part of $p$.
	\end{defi}
	Here we turn to Young tableaux. Our fixed Jordan matrix $J$ corresponds to a Young tableau of shape $J$ and we shall denote this Young tableau by $\mathcal{J}$. Each box thus corresponds to a basis vector $v_i^l$ in $\mathbb{C}^N$. We can fill $\mathcal{J}$ with $\mathbb{C}^\times$-weights that act on these vectors, getting a nice table of weights.
	\begin{przyk}
		Consider the case $J=J_4\oplus J_3\oplus J_3\oplus J_2$. Then we get the following Young tableaux
		\ytableausetup{centertableaux, boxsize=2em}
		\begin{align*}
			J \rightsquigarrow \begin{ytableau}
				v_4^1 & v_3^1 & v_2^1 & v_1^1  \\
				v_3^2 & v_2^2 & v_1^2  \\
				v_3^3 & v_2^3 & v_1^3 \\
				v_2^4 & v_1^4  
			\end{ytableau}  && \mathbb{C}^\times\text{-weights} = \begin{ytableau}
				1 & -3 & -7 & -11  \\
				2 & -2 & -6  \\
				3 & -1 & -5 \\
				4 & 0  
			\end{ytableau} 
		\end{align*}
	\end{przyk}
	We can endow the set of basis elements with a "twisted" lexicographic order. We state the exact definition below.
	\begin{defi}\label{PartialOrder}
		On the set of basis elements $\{v_i^l:i=1,\dots,j_l,l=1,\dots,M\}$ consider the twisted lexicographic order
		\begin{equation*}
			v_i^l < v_j^s \iff \text{either } j_s-j<j_l-i \text{ or } j_s-j=j_l-i \text{ and }l<s.
		\end{equation*}
		This order is of course equivalent to the order of $\mathbb{C}^\times$-weights, i.e.
		\begin{equation*}
			\text{weight}(v_i^l)<\text{weight}(v_j^s) \iff v_i^l<v_j^s.
		\end{equation*}
		We also extend this order to a partial order on fixed points: For two fixed points $p,q\in X^T$ we say that $p<q$ if $p_i\le q_i$ for all $i\in \mathbb{Z}/n\mathbb{Z}$ and $p_j< q_j$ for at least one $j\in\mathbb{Z}/n\mathbb{Z}$.
	\end{defi}
	\begin{uw}
		Note that we can check the inequality $p<q$ for two fixed points $p,q\in X^T$ only at vertices either ending or starting movable parts in $p$. This follows simply from the fact these vertices uniquely determine fixed points.
	\end{uw}
	With this we can parametrize the BB-cells.
	\begin{stw}\label{BBCellsAffine}
		Let $p\in X^{T}$ be a fixed point. Then the BB-cell $\mathcal{C}_p\subset X$ is an affine space.
	\end{stw}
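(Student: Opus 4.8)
The plan is to describe the BB-cell $\mathcal{C}_p$ explicitly as a locally closed subscheme of the ambient product $\mathbb{P}=(\mathbb{P}^{N-1})^n$ and then to exhibit an isomorphism onto an affine space by writing down coordinates. First I would fix the fixed point $p\in X^T$, so that over the $j$-th vertex $p$ selects a basis vector $v_{i_j}^{l_j}$ (equivalently, $p$ corresponds to a successor-closed subquiver $S_p$ of $Q(\mathcal{M})$ with exactly one vertex over each copy of a cyclic vertex). A point $x=(x_0,\dots,x_{n-1})$ of $X$ lies in $\mathcal{C}_p$ precisely when $\lim_{\lambda\to0}\lambda\cdot x = p$. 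Using Formula \eqref{C-action}, the limit picks out, in each factor $\mathbb{P}^{N-1}$, the coordinate of highest $\mathbb{C}^\times$-weight with nonzero entry; so $x\in\mathcal{C}_p$ forces, in the $j$-th factor, the coordinate at $v_{i_j}^{l_j}$ to be nonzero and every coordinate of strictly greater weight to vanish. Normalizing that leading coordinate to $1$, the remaining coordinates over the $j$-th vertex are free parameters indexed by basis vectors of weight strictly less than $\mathrm{weight}(v_{i_j}^{l_j})$ — this identifies the BB-cell inside $\mathbb{P}$ (ignoring the subrepresentation condition) with an affine space.

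Next I would impose the condition that $x$ actually be a subrepresentation of $\mathcal{M}$, i.e. that $J$ maps the line over vertex $j$ into the line over vertex $j+1$ for every $j\in\mathbb{Z}/n\mathbb{Z}$. With the leading coordinates normalized to $1$, this is a system of equations among the free parameters: applying $J$ shifts each Jordan-basis index down by one (and kills the bottom vector of each block), so the requirement that $J(x_j)$ be proportional to $x_{j+1}$ becomes, coordinate by coordinate, a collection of relations that — crucially — are \emph{triangular}. That is, because of the weight ordering (Definition \ref{PartialOrder}) and the fact that $J$ strictly decreases weight, each such relation expresses one free parameter over vertex $j+1$ as a polynomial in parameters of strictly smaller weight together with already-determined ones; no genuine constraint among the ``lowest'' parameters arises. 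Solving this triangular system, one sees that $\mathcal{C}_p$ is cut out by graph-type equations and hence is isomorphic to the affine space on the subset of parameters that remain free. I would organize the bookkeeping using the movable parts $\sigma_1,\dots,\sigma_k$ of $p$ from Definition \ref{MovablePart}: each movable part contributes its own independent block of free coordinates, and the cell factors accordingly, which also makes the dimension count transparent.

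The main obstacle I anticipate is the precise verification that the subrepresentation equations are genuinely triangular with respect to the weight order, rather than merely ``mostly'' triangular — in particular, checking that the normalization of leading coordinates to $1$ is compatible across all $n$ vertices simultaneously (there is a cyclic closure condition, since vertex $n$ is vertex $0$), and that this cyclic condition does not collapse the cell or introduce a nonlinear constraint on the would-be free parameters. This is really a statement about the combinatorics of successor-closed subquivers of the spiral coefficient quiver $Q(\mathcal{M})$ together with the $\mathbb{C}^\times$-weights recorded in the Young tableau $\mathcal{J}$, and I would handle it by a careful induction along each movable part, starting from its source vertex and propagating around the cycle. Once that is in place, the isomorphism with affine space is immediate, and in fact this argument is essentially the specialization to our setting of the cellularity result \cite[Theorem 5.6]{GKMTheoryForCyclicQuiverGrassmannians}, which we may also simply invoke; the point of giving the explicit coordinates here is that they will be needed in the next section to analyze $\clBB{p}$.
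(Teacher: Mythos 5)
Your proposal follows essentially the same route as the paper: describe the ambient Bia\l{}ynicki-Birula cell over each vertex as an affine space of free coordinates below the leading weight, then observe that the subrepresentation condition either fully determines the line over vertex $j+1$ from the one over vertex $j$ (when $Jp_j\neq0$) or is vacuous (when $Jp_j=0$), so that $\mathcal{C}_p$ is the product of the ambient cells at the starting vertices of the movable parts. The cyclic-closure obstacle you anticipate dissolves for exactly this reason: every movable part terminates at a vertex where $J$ annihilates the entire line (all basis vectors of weight at least that of $p_i$ are killed by $J$ there), so the chain of determinations never wraps around to impose a constraint on the free parameters.
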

	This was already proven in \cite[Theorem 5.7]{GKMTheoryForCyclicQuiverGrassmannians} in a much more general setting,  but for our special case we will present a proof which will result in an explicit description of these cells.
	\begin{proof}[Proof of Proposition $\ref{BBCellsAffine}$]
		Fix $p\in X^T$. First let us consider $i\in\mathbb{Z}/n\mathbb{Z}$ and we shall study 
		\begin{equation*}
			(\mathcal{C}_p)_i =\{\nu_i\in\proj{}{N-1}:\nu\in\mathcal{C}_p\}.
		\end{equation*}
		Suppose that $p_i=\langle v_j^l\rangle$. Then $(\mathcal{C}_p)_i$ lies in the BB-cell of the ambient variety $\mathcal{C}_i\subseteq\proj{}{N-1}$ associated to $p_i\in\proj{}{N-1}$. This space is easy to describe
		\begin{equation*}
			\mathcal{C}_i = \{u\in\mathbb{C}^N:u_j^l\neq0, u_k^s=0\text{ for all }(k,s)\text{ such that }v_k^s< v_j^l\}.
		\end{equation*}
		Visually, the points of $\mathcal{C}_i$ look as follows
		\ytableausetup{mathmode,boxsize=2.5em}
		\begin{equation*}
			\begin{ytableau}
				u_{j_1}^1 & u_{j_1-1}^1 & \none[\dots] & u_{j+1}^1 & 0 & 0 & \none[\dots] \\
				u_{j_2}^2 & u_{j_1-1}^1  & \none[\dots] & u_{j+1}^2 & 0 & 0 & \none[\dots] \\
				\none[\vdots] & \none[\vdots] & \none & \none[\vdots] & \none[\vdots] & \none[\vdots] \\
				u_{j_l}^l & u_{j_l-1}^l &  \none[\dots] & u_{j+1}^l & 1 & 0 & \none[\dots]\\ 
				\none[\vdots] & \none[\vdots] & \none & \none[\vdots] & \none[\vdots] & \none[\vdots] \\
				u_{j_M}^M & u_{j_M-1}^M & \none[\dots] & u_{j+1}^M & u_{j}^M & 0 & \none[\dots] \\
			\end{ytableau}
		\end{equation*}
		We can choose $u_j^l=1$ to get a parametrization of $\mathcal{C}_i$ as an affine space. The BB-cell of the quiver Grassmannian is the intersection 
		\begin{equation*}
			\mathcal{C}_p = \left( \prod_{i=0}^{n-1}\mathcal{C}_{i} \right)\cap X.
		\end{equation*}
		To pass to another vertex we distinguish two cases
		\begin{enumerate}
			\item $Jv_i^l=0$. Then for any $u\in\mathcal{C}_i$ we have $Ju=0$ as $v_k^s\ge v_j^l$ implies $Jv_k^s=0$. Thus vectors from $(\mathcal{C}_p)_i$ do not impose any conditions on $(\mathcal{C}_p)_{i+1}$.
			\item $Jv_{i}^l=v_{i+1}^l$ is non-zero. Then for any $\nu\in \mathcal{C}_p$ we have
			\begin{equation*}
				J\nu_i=\nu_{i+1}.
			\end{equation*}
			This simply follows from the fact that $\nu_i$ must be spanned by a vector, which has a non-zero coordinate next to $v_i^l$, which does not go to $0$ under $J$. So vectors in $(\mathcal{C}_p)_{i+1}$ are completely determined by vectors in $(\mathcal{C}_p)_i$.
		\end{enumerate}
		Therefore if we have a movable part in $p$ running through vertices $i$ through $m$, then $(\mathcal{C}_p)_i=\mathcal{C}_i$ is an affine space and $(\mathcal{C}_p)_{i+k}=J^k(\mathcal{C}_i)$ for all $k=0,\dots,m$. Dimension of $\mathcal{C}_i$ is given by
		\begin{equation*}
			\dim(\mathcal{C}_i) = |\{ v_j^s:v_j^s>p_i \}|.
		\end{equation*}
		The whole cell $\mathcal{C}_p$ is the product of either $\mathcal{C}_i$'s, in case of $i$ being a starting vertex of a movable part in $p$, or images of elements of $\mathcal{C}_i$ for the other vertices. Thus the dimension of $\mathcal{C}_p$ is the sum of dimensions of $\mathcal{C}_i$'s for all $i$ that start a movable part in $p$.
	\end{proof}
	Next, we shall focus on the closures of the cells. One of the more crucial results regarding the closed cells is that they are smooth, which is exactly what we shall prove now. This is a miracle of subrepresentation vector being $\mathbf{1}=(1,\dots,1)$. Even for $J$ being a single Jordan blocks if we choose other dimension vectors we get singular closed BB cells as was observed in \cite[Remark 3.15]{TotallyNonnegativeGrassmanniansGrassmannNecklacesAndQuiverGrassmannians}.
	\begin{tw}\label{ClBBCellsSmooth}
		Let $p\in X^{T}$ be a fixed point. Then the closure of the BB-cell $\mathcal{C}_p$ is a product of smooth quiver Grassmannians.
	\end{tw}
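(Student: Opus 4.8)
The plan is to split $\clBB{p}$ into a product indexed by the movable parts of $p$ and to recognise each factor as the quiver Grassmannian of an equioriented type-$A$ representation given by a chain of surjections, which is manifestly smooth. Write $S_p=(\sigma_1,\dots,\sigma_r)$ for the decomposition into movable parts, let $\sigma_a$ come from the $l_a$-th block, have length $m_a$, and occupy the (contiguous, pairwise disjoint, covering) arc of vertices $i_a,i_a+1,\dots,i_a+m_a$, so that $p_{i_a+t}=\langle v^{l_a}_{j_{l_a}-m_a+t}\rangle$ and the terminal vertex carries the sink $v^{l_a}_{j_{l_a}}$. By the proof of Proposition~\ref{BBCellsAffine}, $\mathcal{C}_p=\prod_{a=1}^{r}C^{(a)}$ inside $\mathbb{P}=(\proj{}{N-1})^{n}$, where $C^{(a)}\subseteq\prod_{i\in I_a}\proj{}{N-1}$ is the image of the affine cell $\mathcal{C}_{i_a}\subseteq\mathbb{P}(W_{i_a})$ under $\nu_{i_a}\mapsto(\nu_{i_a},J\nu_{i_a},\dots,J^{m_a}\nu_{i_a})$. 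Since Zariski closure commutes with finite products, $\clBB{p}=\prod_{a=1}^{r}E_a$, where $E_a$ is the closure of $C^{(a)}$ in $\prod_{i\in I_a}\proj{}{N-1}$; closure may be taken in $\mathbb{P}$ or in $X$ alike, as $X$ is closed in $\mathbb{P}$, and no extra relation appears across a gap between consecutive movable parts because the vertex ending a movable part carries a sink direction, on which the compatibility condition defining $X$ is vacuous. It therefore suffices to prove each $E_a$ is a smooth quiver Grassmannian.

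\emph{Identifying $E_a$.} Fix $\sigma=\sigma_a$ and for $0\le t\le m_a$ put $W_{i_a+t}=\langle v^s_k: v^s_k\ge p_{i_a+t}\rangle$; by Definition~\ref{PartialOrder} this is spanned by all basis vectors of co-height $<m_a-t$ together with those of co-height exactly $m_a-t$ in a block $s\ge l_a$ (co-height of $v^s_k$ meaning $j_s-k$). Since $J$ lowers co-height by one and annihilates co-height $0$, a direct count gives $J(W_{i_a+t})=W_{i_a+t+1}$ for $0\le t<m_a$, so $\mathcal{N}^{(\sigma)}:=(W_{i_a}\twoheadrightarrow W_{i_a+1}\twoheadrightarrow\cdots\twoheadrightarrow W_{i_a+m_a})$ is a representation of the equioriented quiver $A_{m_a+1}$ with surjective structure maps. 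Every point of $C^{(a)}$ has its vertex-$(i_a+t)$ line supported on basis vectors $\ge p_{i_a+t}$, so $C^{(a)}\subseteq\qgr{1}{\mathcal{N}^{(\sigma)}}$, and the latter is closed; hence $E_a\subseteq\qgr{1}{\mathcal{N}^{(\sigma)}}$. Conversely, inside $\qgr{1}{\mathcal{N}^{(\sigma)}}$ the locus where the vertex-$i_a$ line lies in the open affine cell $\mathcal{C}_{i_a}\subseteq\mathbb{P}(W_{i_a})$ is open and nonempty, and since such a line has nonzero coordinate at $p_{i_a}$, which has co-height $m_a$, none of $J,J^2,\dots,J^{m_a}$ kills it, so this locus equals $C^{(a)}$; once we know $\qgr{1}{\mathcal{N}^{(\sigma)}}$ is irreducible it is dense, giving $E_a=\qgr{1}{\mathcal{N}^{(\sigma)}}$.

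\emph{Smoothness.} I would show by induction on $m_a$ that $\qgr{1}{\mathcal{N}^{(\sigma)}}$ is smooth and irreducible, which also closes the previous step. For $m_a=0$ it is $\mathbb{P}(W_{i_a})$. For the inductive step, forgetting the vertex-$i_a$ line defines a morphism onto $\qgr{1}{\mathcal{N}'}$, where $\mathcal{N}'=(W_{i_a+1}\twoheadrightarrow\cdots\twoheadrightarrow W_{i_a+m_a})$; as the structure map $\bar{J}\colon W_{i_a}\to W_{i_a+1}$ is surjective, the preimage of the tautological line subbundle $\mathcal{L}\subseteq W_{i_a+1}\otimes\mathcal{O}$ on $\qgr{1}{\mathcal{N}'}$ is a subbundle $\mathcal{E}\subseteq W_{i_a}\otimes\mathcal{O}$ of constant rank $\dim\ker\bar{J}+1$, and the fibre over $(L_\bullet)$ is precisely $\mathbb{P}(\bar{J}^{-1}(L_{i_a+1}))=\mathbb{P}(\mathcal{E}_{(L_\bullet)})$. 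Thus $\qgr{1}{\mathcal{N}^{(\sigma)}}\cong\mathbb{P}(\mathcal{E})$ is a Zariski-locally trivial projective bundle over the smooth irreducible base $\qgr{1}{\mathcal{N}'}$, hence smooth and irreducible. Therefore $\clBB{p}=\prod_{a=1}^{r}\qgr{1}{\mathcal{N}^{(\sigma_a)}}$ is a product of smooth quiver Grassmannians.

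The part I expect to carry the real content is the surjectivity $J(W_{i_a+t})=W_{i_a+t+1}$ and the ensuing projective-bundle description: it is exactly here that the dimension vector $\mathbf{1}$ intervenes, since for a line the condition $J\nu_i\subseteq\nu_{i+1}$ either determines $\nu_{i+1}$ or is empty, with no intermediate partial-rank stratum to create singularities (contrast \cite[Remark 3.15]{TotallyNonnegativeGrassmanniansGrassmannNecklacesAndQuiverGrassmannians}). A secondary point to be careful with is that $E_a$ is genuinely all of $\qgr{1}{\mathcal{N}^{(\sigma)}}$ and not a proper (or non-reduced) subscheme; this is handled by the openness–density argument above, which in turn relies on the irreducibility coming out of the bundle construction.
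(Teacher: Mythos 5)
Your route is essentially the paper's: decompose $\clBB{p}$ as a product over movable parts and identify each factor with $\qgr{1}{-}$ of an equioriented type-$A$ representation whose structure maps are surjections. The one genuine divergence is the smoothness step, where you build an iterated projective bundle instead of invoking $\mathrm{Ext}^1(R_p,R_p)=0$ and the criterion from \cite{OnTheQuiverGrassmannianInTheAcyclicCase} as the paper does; your version is more self-contained and, usefully, produces the irreducibility needed to see that the closure of the open cell really is the whole quiver Grassmannian (a point you treat more carefully than the paper does).

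However, the step you yourself flag as carrying the real content is false as stated: with $W_{i_a+t}:=\langle v_k^s : v_k^s\ge p_{i_a+t}\rangle$ one does \emph{not} in general have $J(W_{i_a+t})=W_{i_a+t+1}$. Take $J=J_2\oplus J_1$, $n=2$, and the movable part of length $1$ from the first block, so $p_{i_a}=\langle v_1^1\rangle$ and $p_{i_a+1}=\langle v_2^1\rangle$. Then $W_{i_a}=\mathbb{C}^3$ and $J(W_{i_a})=\langle v_2^1\rangle$, but $W_{i_a+1}=\langle v_2^1,v_1^2\rangle$ because $v_1^2$ has co-height $0$ and larger block index. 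The obstruction is the leftmost box $v_1^s$ of a short row: it has no $J$-preimage, so surjectivity fails whenever some block $s$ has $j_s\le m_a-t-1$, or $s\ge l_a$ and $j_s=m_a-t$. In that situation your $\qgr{1}{\mathcal{N}^{(\sigma)}}$ is reducible --- in the example it has two components, $\mathbb{P}^2\times\{\langle v_2^1\rangle\}$ and $\mathbb{P}(\ker J)\times\mathbb{P}(W_{i_a+1})\cong\mathbb{P}^1\times\mathbb{P}^1$ --- and strictly contains $E_a=\mathbb{P}^2\times\{\langle v_2^1\rangle\}$, so both the constant-rank claim for $\mathcal{E}$ and the density argument collapse. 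The repair is to take the intermediate spaces to be the images $J^t(W_{i_a})$ rather than $\langle v\ge p_{i_a+t}\rangle$: surjectivity then holds by construction, your projective-bundle induction goes through verbatim, and $C^{(a)}$ is still a nonempty open subset of the resulting irreducible quiver Grassmannian, hence dense. I note that the paper's own proof asserts the same surjectivity for the same spaces $\mathbb{C}^{d_i}=\langle v\ge p_i\rangle$ (and its dimension count $d_m-1$ should read $d_0-1$), so you have faithfully reproduced its gap; but since your argument leans on the claim explicitly and calls it the crux, it must be corrected as above.
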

	\begin{proof}
		We will provide an explicit description of these subvarieties. First consider the case where $p\in X^T$ is a single movable part. Again, denote by $\clBB{i}\subseteq\proj{}{N-1}$ the closed BB-cell associated to the fixed point $p_i$ for $i\in\mathbb{Z}/n\mathbb{Z}$. Each $\mathcal{C}_i$ is isomorphic to a projective space $\proj{}{d_i-1}$ by forgetting all zero's imposed on vectors spanning lines in $\mathcal{C}_i$, so
		\begin{equation*}
			d_i = |\{v_k^s:v_k^s\ge p_i\}|.
		\end{equation*}
		Let $R_p\in\text{rep}A_n$ be the representation of the equioriented $A_n$-quiver defined by
		\begin{equation*}
			R_p: \mathbb{C}^{d_0}\xrightarrow{\pi_0}\mathbb{C}^{d_1}\xrightarrow{\pi_1}\mathbb{C}^{d_2}\to\dots\to\mathbb{C}^{d_n},
		\end{equation*}
		where $\pi_i:\mathbb{C}^{d_i}\to\mathbb{C}^{d_{i+1}}$ is the map induced from $J$ after forgetting these basis vectors $v_j^s$ such that $v_j^s<p_i$. In particular the $\pi_i$'s are epimorphisms! By forgetting all zeros in the description of $\clBB{p}$ we get an isomorphism
		\begin{equation*}
			\clBB{p}\cong \qgr{1}{R_p}.
		\end{equation*}
		Since all arrows in $R_p$ are epimorphisms, then the decomposition of $R_p$ into irreducible components is a decomposition into components that are all supported at the first vertex. As these are all injective, we get that $\text{Ext}^1(R_p,R_p)=0$. From the known theory of acyclic quiver Grassmannians (\cite[Corollary 4]{OnTheQuiverGrassmannianInTheAcyclicCase}) we get that $\qgr{1}{R_p}$ is smooth, irreducible and of dimension
		\begin{equation*}
			\dim \qgr{1}{R_p} = \langle \mathbf{1},\mathbf{d}-\mathbf{1}\rangle_{A_{m+1}} = \sum_{i=0}^m(d_i-1) - \sum_{i=0}^{m-1}(d_i-1) = d_m-1.
		\end{equation*}
		If $p$ is comprised of many movable parts, then since each movable part ends at a vector space $p_i$ such that $Jp_i=0$, we have that $\clBB{p}$ is the product of quiver Grassmannians associated to each movable part alone.
	\end{proof}
	Another crucial property of this BB-decomposition is that the closed cells form a stratification. Since the quiver Grassmannian $X$ has a stratification by affine cells, then it is equivariantly formal, which is one of the ingredients baked into the definition of GKM spaces. It has first been proven in \cite[Theorem 5.7]{GKMTheoryForCyclicQuiverGrassmannians}.
	\begin{stw}
		The BB-decomposition of $X$ forms a stratification, i.e. if for $p,q\in X^T$ we have $\mathcal{C}_p\cap \clBB{q}\neq\emptyset$, then $\mathcal{C}_p\subseteq\clBB{q}$.
	\end{stw}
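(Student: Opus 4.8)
The plan is to exploit the explicit affine-cell description of $\mathcal{C}_p$ obtained in the proof of Proposition~$\ref{BBCellsAffine}$ together with the fact (Proposition~$\ref{FixedPoints}$) that fixed points are successor-closed subquivers of $Q(\mathcal{M})$ selecting exactly one vertex over each vertex of $\Delta_n$. Concretely, suppose $\mathcal{C}_p\cap\clBB{q}\neq\emptyset$, and pick a point $\nu$ in this intersection. I want to show $\mathcal{C}_p\subseteq\clBB{q}$, and the natural route is: (i) show $\mathcal{C}_p$ is \emph{irreducible} (indeed it is an affine space, by Proposition~$\ref{BBCellsAffine}$), so its closure $\clBB{p}$ is irreducible; (ii) show that $\clBB{q}$ is closed in $X$ (immediate, being the closure of a subset) and that $\mathcal{C}_p\cap\clBB{q}$ is a nonempty open subset of $\clBB{p}$; then irreducibility forces $\clBB{p}\subseteq\clBB{q}$, hence $\mathcal{C}_p\subseteq\clBB{q}$.

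The key step is therefore (ii): that $\mathcal{C}_p\cap\clBB{q}$ is open in $\overline{\mathcal{C}_p}$. Here I would use the $\mathbb{C}^\times$-equivariance. The BB-decomposition $X=\bigsqcup_{p\in X^T}\mathcal{C}_p$ is by locally closed $\mathbb{C}^\times$-stable subsets, and a standard fact about BB-decompositions of smooth (or more generally well-behaved) $\mathbb{C}^\times$-varieties is that each $\mathcal{C}_p$ is locally closed with $\overline{\mathcal{C}_p}\subseteq\bigcup_{p'}\mathcal{C}_{p'}$ where the union is over $p'$ in the closure of the attracting order; in particular $\mathcal{C}_p$ is open in $\overline{\mathcal{C}_p}$. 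Intersecting with the closed set $\clBB{q}$: if $\nu\in\mathcal{C}_p\cap\clBB{q}$, then since $\clBB{q}$ is $\mathbb{C}^\times$-stable and closed, taking $\lim_{\lambda\to 0}\lambda\cdot(\cdot)$ we see the fixed point $p=\lim_{\lambda\to 0}\lambda\cdot\nu$ lies in $\clBB{q}$, hence $p\in X^T\cap\clBB{q}$. Now the whole cell $\mathcal{C}_p=\{x:\lim_{\lambda\to0}\lambda\cdot x=p\}$, and for any $x\in\mathcal{C}_p$ the orbit closure $\overline{\mathbb{C}^\times\cdot x}$ contains both $x$ and $p$; since $p\in\clBB{q}$ and $\clBB{q}$ is closed and $\mathbb{C}^\times$-stable, I would like to run the reverse flow $\lim_{\lambda\to\infty}$ — but that may leave $X$. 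Instead the cleaner argument: $\mathcal{C}_p$ is an affine space (Proposition~$\ref{BBCellsAffine}$) and by the explicit coordinates there it is the orbit of an affine-linear neighbourhood of $p$; since $\clBB{q}$ is closed and contains $p$ and is $\mathbb{C}^\times$-stable, and $\mathcal{C}_p\cap\clBB{q}$ is nonempty, its closure in $\mathcal{C}_p$ is $\mathbb{C}^\times$-stable, contains $p$, hence (using that $p$ is the unique "source" of the contracting action on the affine space $\mathcal{C}_p$, i.e. every orbit closure in $\mathcal{C}_p$ meets $p$) any closed $\mathbb{C}^\times$-stable subset of $\overline{\mathcal{C}_p}$ meeting $\mathcal{C}_p$ and containing $p$ must be all of $\overline{\mathcal{C}_p}$ after intersecting with $\mathcal{C}_p$...

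I expect the main obstacle to be making this last "source forces everything" argument airtight without circularity: one must show that a closed $\mathbb{C}^\times$-stable subset $Z\subseteq\overline{\mathcal{C}_p}$ with $Z\cap\mathcal{C}_p\neq\emptyset$ satisfies $\mathcal{C}_p\subseteq Z$. The honest way is: $Z$ closed and $\mathbb{C}^\times$-stable, pick $z\in Z\cap\mathcal{C}_p$; then $\lim_{\lambda\to 0}\lambda\cdot z=p\in Z$; but $\mathcal{C}_p\cap\clBB{q}$ need not be irreducible a priori, so instead one argues on the level of $\clBB{q}$ directly. The slick finish: $\mathcal{C}_p$ is irreducible, $\mathcal{C}_p\cap\clBB{q}$ is closed in $\mathcal{C}_p$ (being the trace of a closed set) and nonempty and $\mathbb{C}^\times$-stable containing the attractor $p$; since $\mathcal{C}_p\setminus\clBB{q}$ is open in $\mathcal{C}_p$ and $\mathbb{C}^\times$-stable, if it were nonempty it would contain a point whose orbit limit is $p\notin\mathcal{C}_p\setminus\clBB{q}$, contradicting that $\mathcal{C}_p\setminus\clBB{q}$ — as an open $\mathbb{C}^\times$-stable subset of the affine cell — must itself contain all limit points $p$ of its orbits (an open attractor-stable subset of the cell containing a point must contain $p$). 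Hence $\mathcal{C}_p\setminus\clBB{q}=\emptyset$, i.e. $\mathcal{C}_p\subseteq\clBB{q}$, as desired. I would also remark that, combined with Theorem~$\ref{OrderOfBBCells}$, this recovers that the closure order on cells matches the partial order on $X^T$ from Definition~$\ref{PartialOrder}$.
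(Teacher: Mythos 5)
Your strategy is genuinely different from the paper's --- you try to prove the stratification property intrinsically, from irreducibility of the affine cell $\mathcal{C}_p$ together with $\mathbb{C}^\times$-equivariance --- but the decisive step rests on a false lemma. You claim that a nonempty open $\mathbb{C}^\times$-stable subset of the attracting cell $\mathcal{C}_p$ must contain the fixed point $p$; the subset $\mathcal{C}_p\setminus\{p\}$ is an immediate counterexample (it is open, stable and nonempty, yet $p=\lim_{\lambda\to 0}\lambda\cdot x$ lies only in the \emph{closure} of each orbit, which an open stable set has no obligation to contain). So there is no contradiction in assuming $\mathcal{C}_p\setminus\clBB{q}\neq\emptyset$. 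What your argument actually establishes is that $Z:=\mathcal{C}_p\cap\clBB{q}$ is a nonempty, closed, $\mathbb{C}^\times$-stable subset of $\mathcal{C}_p$ containing $p$ --- and such a subset need not be all of $\mathcal{C}_p$ (a coordinate subspace of the attracting affine space is closed, stable and contains the origin). The earlier variant, that $\mathcal{C}_p\cap\clBB{q}$ is \emph{open} in $\clBB{p}$, is asserted without justification and is essentially equivalent to the statement being proved.

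The gap is not cosmetic: Bia{\l}ynicki-Birula decompositions of smooth projective varieties fail to be stratifications in general, so no purely topological or equivariant-formal argument of this shape can succeed; some input specific to $X$ is unavoidable. The paper supplies it by working vertexwise in the ambient space: over each $i\in\mathbb{Z}/n\mathbb{Z}$ the cell $(\mathcal{C}_p)_i$ is the trace on $X$ of the standard attracting cell of $p_i$ in $\proj{}{N-1}$ (a linear action with pairwise distinct weights, whose cells are the usual coordinate cells and do stratify), and by the proof of Proposition~\ref{BBCellsAffine} one has $\mathcal{C}_p=\left(\prod_i\mathcal{C}_i\right)\cap X$, so the stratification property is inherited from the ambient product of projective spaces. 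To salvage your approach you would need to replace the openness lemma by this kind of explicit coordinate comparison, i.e.\ verify directly from the description in Proposition~\ref{BBCellsAffine} that $\clBB{q}$ contains every point of $\mathcal{C}_p$ once it contains one.
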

	\begin{proof}
		This follows immediately from the fact that if we fix $i\in\mathbb{Z}/n\mathbb{Z}$ and pick two fixed points $p,q\in X^T$, then both $p_i$ and $q_i$ are fixed points in $\proj{}{N-1}$ under the $\mathbb{C}^\times$-action defined over the $i$-th vertex of $\Delta_n$. In particular if $\mathcal{C},\mathcal{D}\subseteq\proj{}{N-1}$ are the BB-cells corresponding to $p_i$ and $q_i$ respectively in the ambient space, then
		\begin{align*}
			& \mathcal{C}\cap X = (\mathcal{C}_p)_i,\\
			& \mathcal{D}\cap X = (\mathcal{C}_q)_i.
		\end{align*}
		These larger BB-cell do form a stratification, so if $\mathcal{C}_p\cap\clBB{q}\neq\emptyset$, then $\mathcal{C}_p\subseteq\clBB{q}$, since we are actually comparing the larger BB-cells.
	\end{proof}
	The partial order on fixed points defined in Definition $\ref{PartialOrder}$ lets us neatly describe the partial order of the closed BB-cells. 
	\begin{tw}\label{OrderOfBBCells}
		For two fixed points $p,q\in X^T$ we have $\clBB{p}\subseteq\clBB{q}$ if and only if $q\le p$.
	\end{tw}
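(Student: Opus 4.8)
The plan is to reduce the theorem to the explicit description of $\clBB{q}$ obtained in the proof of Theorem \ref{ClBBCellsSmooth}, and then to read off both implications directly from the twisted lexicographic order. First I would record a reduction. By the stratification property established just above, $\mathcal{C}_p\cap\clBB{q}\neq\emptyset$ implies $\mathcal{C}_p\subseteq\clBB{q}$; since $p\in\mathcal{C}_p$ and $\clBB{q}$ is closed in $X$, this gives
\[
\clBB{p}\subseteq\clBB{q}\quad\Longleftrightarrow\quad p\in\clBB{q},
\]
so it suffices to determine which fixed points lie in $\clBB{q}$.

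Next I would pin down $\clBB{q}$ as a quiver Grassmannian. For $i\in\mathbb{Z}/n\mathbb{Z}$ set $W^q_i:=\langle v^s_k : v^s_k\ge q_i\rangle\subseteq\mathbb{C}^N$; these are the subspaces that appear, after forgetting the imposed zeros, in the proof of Theorem \ref{ClBBCellsSmooth}, which identifies $\clBB{q}$ with $\bigl(\prod_i\mathbb{P}(W^q_i)\bigr)\cap X$. I would then check that $W^q=(W^q_i)_i$ is a subrepresentation of $\mathcal{M}$, i.e. $JW^q_i\subseteq W^q_{i+1}$: this is a short computation with $\mathbb{C}^\times$-weights, using that the twisted lexicographic order coincides with the order of weights (Definition \ref{PartialOrder}) and that $J$ adds $M$ to the weight of every non-top basis vector. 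Indeed, if $Jq_i\neq0$ then $q_{i+1}=Jq_i$ and, for $v^s_k\ge q_i$ with $Jv^s_k\neq0$, one has $\text{weight}(Jv^s_k)=\text{weight}(v^s_k)+M\ge\text{weight}(q_i)+M=\text{weight}(q_{i+1})$; while if $Jq_i=0$ then $q_i$ is the top of a block, hence has weight $\ge1$, so every $v^s_k\ge q_i$ also has weight $\ge1$, is therefore itself the top of a block (non-top basis vectors have nonpositive weight), and is killed by $J$. Thus $\clBB{q}=\qgr{1}{W^q}$, and a fixed point $r\in X^T$ lies in $\clBB{q}$ exactly when $r_i\subseteq W^q_i$ for all $i$; since each $r_i$ is spanned by a single basis vector (Proposition \ref{FixedPoints}), this happens precisely when $q_i\le r_i$ for all $i$, i.e. when $q\le r$.

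Both implications then follow. If $\clBB{p}\subseteq\clBB{q}$, then $p\in\clBB{p}\subseteq\clBB{q}\subseteq\prod_i\mathbb{P}(W^q_i)$, so $p_i\subseteq W^q_i$ for all $i$, hence $q_i\le p_i$ for all $i$ and $q\le p$. Conversely, if $q\le p$ then each $p_i$ is spanned by a basis vector which is $\ge q_i$, so $p_i\subseteq W^q_i$ for all $i$; combined with the fact that $p$ is itself a subrepresentation of $\mathcal{M}$, so $Jp_i\subseteq p_{i+1}$, this yields $p\in\qgr{1}{W^q}=\clBB{q}$, and the reduction in the first paragraph gives $\clBB{p}\subseteq\clBB{q}$.

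The step I expect to require the most care is the identification $\clBB{q}=\qgr{1}{W^q}$. The inclusion $\clBB{q}\subseteq\qgr{1}{W^q}$ is immediate once $W^q$ is known to be a subrepresentation, since $\mathcal{C}_q$ sits inside the closed subvariety $\qgr{1}{W^q}$ of $X$. For the reverse inclusion one must show that $\mathcal{C}_q$ is dense in $\qgr{1}{W^q}$; this comes down to $\mathcal{C}_q$ being a nonempty open subset of $\qgr{1}{W^q}$ — clear from the parametrization of $\mathcal{C}_q$ in Proposition \ref{BBCellsAffine} — together with the irreducibility of $\qgr{1}{W^q}$. The latter is the same input used in Theorem \ref{ClBBCellsSmooth}: since $J$ vanishes on $W^q_i$ whenever $i$ ends a movable part of $q$, and the movable parts of $q$ partition $\mathbb{Z}/n\mathbb{Z}$, the representation $W^q$ splits along those vertices as a direct sum of equioriented type-$A$ representations with surjective structure maps and pairwise disjoint supports; hence $\qgr{1}{W^q}$ is a product of quiver Grassmannians that are smooth and irreducible by the argument already used in the proof of Theorem \ref{ClBBCellsSmooth}. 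The remainder is bookkeeping with the weight order.
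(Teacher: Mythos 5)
Your proof is correct, and at its core it rests on the same observation as the paper's one\-/sentence proof: componentwise, the closure order of the BB-cells in each $\proj{}{N-1}$ is the order of the $\mathbb{C}^\times$-weights, i.e.\ the twisted lexicographic order of Definition \ref{PartialOrder}. The difference is one of completeness rather than of strategy. The paper stops at that sentence, which settles the ``only if'' direction (restrict to the $i$-th factor and use $p\in\clBB{q}$), but the ``if'' direction additionally needs that $\clBB{q}$ is the \emph{whole} intersection of $X$ with the product of the ambient closed cells, not something strictly smaller; a priori $\mathcal{C}_p\subseteq\bigl(\prod_i\overline{\mathcal{C}_{q_i}}\bigr)\cap X$ does not yet give $\mathcal{C}_p\subseteq\clBB{q}$. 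That identification, $\clBB{q}=\qgr{1}{W^q}$, is exactly what you supply: the weight computation showing $W^q$ is a subrepresentation, the observation that $\mathcal{C}_q$ is a nonempty open subset of $\qgr{1}{W^q}$, and the irreducibility of $\qgr{1}{W^q}$ borrowed from the $\text{Ext}$-vanishing argument in the proof of Theorem \ref{ClBBCellsSmooth}. Your reduction of the containment $\clBB{p}\subseteq\clBB{q}$ to the single condition $p\in\clBB{q}$ via the stratification property is also a clean touch that the paper leaves implicit. In short, this is a fully worked-out version of the paper's argument, and the extra steps you include are genuinely needed for the ``if'' direction; I find no gaps.
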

	\begin{proof}	
		This is true for the BB-cells of all $\proj{}{N-1}$ as the lexicographic order is the order of weights acting on this space.
	\end{proof}
	When it comes to calculations in cohomology this becomes combinatorially intense, even if we restrict ourselves to just a single Jordan block. This particular case has been studied in \cite[Appendix B]{TotallyNonnegativeGrassmanniansGrassmannNecklacesAndQuiverGrassmannians}. We shall give a few observations about this case below.
	\begin{przyk}\label{SingleBlockRepresentation}
		Suppose that $J$ is a single Jordan block of size $N$. In this case the BB-decomposition simplifies heavily as we do not have to worry about other Jordan blocks. Our one-dimensional torus acts on each $\mathbb{C}^N$ in $\mathcal{M}$ in the standard way.\\
		
		A fixed point $p\in X^T$ in this case corresponds uniquely to a non-empty subset $I_p\subseteq\mathbb{Z}/n\mathbb{Z}$ -- the subset of these vertices, where a movable part in $p$ ends. In other words, we have
		\begin{equation*}
			I_p = \{i\in\mathbb{Z}/n\mathbb{Z}:Jp_i=0\}.
		\end{equation*}
		Subsets that define a fixed point must be non-empty and the difference of two consecutive points in such a subset must be at most $N$. The partial order from Definition $\ref{PartialOrder}$ simplifies to the usual order on the power set of $\mathbb{Z}/n\mathbb{Z}$.\\
		
		The open BB-cells are, of course, affine spaces; their dimension obtained from Proposition $\ref{BBCellsAffine}$ simplifies to
		\begin{equation*}
			\dim \mathcal{C}_p = |I_p^c|.
		\end{equation*}
		Movable parts in fixed points are parametrized by fixing a vertex (for example fixing the ending vertex of a segment) and length. 
		
		Observe that in this case we can restrict ourselves only to the case where $N\le n$.
		\begin{obs}
			Let $\mathcal{M}_N$ be the representation of $\Delta_n$, which has the same matrix over each arrow $J$ that is a single Jordan block of size $N$. If $N\ge n$, then we have
			\begin{equation*}
				\qgr{1}{\mathcal{M}_N}\cong\qgr{1}{\mathcal{M}_n}.
			\end{equation*}
			If $N\le N'$, then there exists an embedding
			\begin{equation*}
				\qgr{1}{\mathcal{M}_N}\hookrightarrow\qgr{1}{\mathcal{M}_{N'}}.
			\end{equation*}
		\end{obs}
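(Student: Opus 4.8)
The plan is to exhibit the two maps directly on the level of representations and then check that they induce the claimed iso/embedding of quiver Grassmannians. For the first statement, suppose $N \ge n$. A subrepresentation of $\mathcal{M}_N$ of dimension vector $\mathbf{1}$ is a choice of a line $p_i \subseteq \mathbb{C}^N$ over each vertex $i \in \mathbb{Z}/n\mathbb{Z}$ with $J p_i \subseteq p_{i+1}$. Since $J$ is a single nilpotent Jordan block of size $N$, the only way to have $J p_i \ne 0$ for all $i$ around the cycle would force a cycle of length $n$ of nonzero images, but $J^n = 0$ once $n \le N$ only kills vectors supported in the bottom $n$ rows — the point is rather that going around the $n$-cycle we apply $J$ a total of $n$ times, so at least one $p_i$ must satisfy $J p_i = 0$, i.e. every fixed point (and hence, by the BB-stratification, every point) has all its lines supported in the span of $v_1, \dots, v_n$ (the bottom $n$ Jordan basis vectors, those killed by $J^{\le n}$). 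More precisely: I would argue that for any subrepresentation, each line $p_i$ lies in $\langle v_1, \dots, v_n \rangle$, because the condition $Jp_i \subseteq p_{i+1}$ iterated $n$ times gives $J^n p_i \subseteq p_i$, and $J^n p_i$ is a subspace of the line $p_i$ strictly smaller in "Jordan height" unless it is zero, forcing $J^n p_i = 0$, i.e. $p_i \subseteq \ker J^n = \langle v_1, \dots, v_n\rangle$. The inclusion $\langle v_1,\dots,v_n\rangle \hookrightarrow \mathbb{C}^N$ intertwines $J|_{\langle v_1,\dots,v_n\rangle}$ (a single Jordan block of size $n$) with $J$, so it realizes $\mathcal{M}_n$ as a subrepresentation of $\mathcal{M}_N$ and induces a morphism $\qgr{1}{\mathcal{M}_n} \to \qgr{1}{\mathcal{M}_N}$; the previous paragraph shows this morphism is a bijection on points, and since both sides are reduced projective schemes of the same (cell-wise) structure — indeed the BB-cells match up combinatorially via the identification $I_p \subseteq \mathbb{Z}/n\mathbb{Z}$, which does not see $N$ once $N \ge n$ — it is an isomorphism.

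For the second statement, for $N \le N'$ I would use the same mechanism: the inclusion $\iota: \mathbb{C}^N = \langle v_1, \dots, v_N\rangle \hookrightarrow \mathbb{C}^{N'}$ onto the span of the bottom $N$ Jordan basis vectors of the size-$N'$ block intertwines the size-$N$ Jordan block with the size-$N'$ one (since $J_{N'} v_i = v_{i-1}$ for $i \le N$ and $J_{N'} v_1 = 0 = \iota(J_N v_1)$). Hence $\iota$ exhibits $\mathcal{M}_N$ as a subrepresentation of $\mathcal{M}_{N'}$, and sending a subrepresentation $(p_i)_i$ of $\mathcal{M}_N$ to $(\iota(p_i))_i$ gives a well-defined morphism $\qgr{1}{\mathcal{M}_N} \to \qgr{1}{\mathcal{M}_{N'}}$. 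Injectivity on points is clear because $\iota$ is injective; to upgrade to a closed immersion I would note that this map is $T$-equivariant and identifies $\qgr{1}{\mathcal{M}_N}$ with the union of closed BB-cells $\clBB{q}$ of $\qgr{1}{\mathcal{M}_{N'}}$ ranging over fixed points $q$ all of whose movable parts have length $< N$ — by Theorem $\ref{OrderOfBBCells}$ this union is closed, and on it the map is a bijective morphism of reduced schemes onto a variety, hence an isomorphism onto its image.

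The main obstacle, and the place where I would spend the most care, is the scheme-theoretic (as opposed to merely set-theoretic) statements: checking that the bijective morphisms above are actually isomorphisms resp. closed immersions rather than just bijective on closed points. For quiver Grassmannians this is genuinely something to verify since they need not be reduced. The cleanest route is to avoid reducedness issues entirely by observing that in both cases the source and the relevant subscheme of the target have a cell decomposition into affine spaces indexed by the same combinatorial data (subsets $I_p \subseteq \mathbb{Z}/n\mathbb{Z}$ with consecutive gaps $\le \min(N, n)$, together with the explicit affine charts from the proof of Proposition $\ref{BBCellsAffine}$), and the morphism restricts to an isomorphism of affine spaces cell-by-cell; gluing these gives the global isomorphism. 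Alternatively one can invoke that $\qgr{1}{\mathcal{M}_{N'}}$ is known to be reduced (it is even, by Theorem $\ref{ClBBCellsSmooth}$ applied to its closed cells, a union of smooth varieties) and then a bijective morphism from a variety onto a closed subvariety that is an immersion on each cell is an isomorphism. I would present the cell-by-cell argument as the primary one since it is self-contained within this section.
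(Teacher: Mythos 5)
Your proposal is correct and follows essentially the same route as the paper: for $N\ge n$ you iterate the containments $Jp_i\subseteq p_{i+1}$ around the cycle to get $J^np_i\subseteq p_i$, observe that $J^n$ strictly shifts the support of any vector it does not kill, and conclude $p_i\subseteq\ker J^n\cong\mathbb{C}^n$, while for $N\le N'$ you use the same linear embedding of $\mathbb{C}^N$ onto $\ker J_{N'}^N$ that the paper uses. The additional scheme-theoretic care you take (cell-by-cell comparison, reducedness) goes beyond the paper's purely pointwise argument but does not change the underlying idea.
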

		\begin{proof}
			To see the first part let us assume that $M\in\qgr{1}{\mathcal{M}_N}$ with $N\ge n$ and fix a vertex $i\in\mathbb{Z}/n\mathbb{Z}$. Let $M_i=\langle u\rangle$ and define 
			\begin{equation*}
				j = \min(k:u_k\neq 0).
			\end{equation*}
			After applying $J$ to $u$ $n$ times we arrive at the condition $J^nu\in\langle u \rangle$, note that either $j+n>N$, in which case $J^nu=0$ or $j+n\le N$ and then
			\begin{equation*}
				\min(k:(J^nu)_k\neq0) = j+n.
			\end{equation*}
			For the condition $J^nu\in\langle u\rangle$ to be met we must thus have $J^nu=0$ and this is equivalent to $u$ having $0$ as its first $N-n$ coordinates. Thus any element of $\qgr{1}{\mathcal{M}_N}$ is in fact a subrepresentation of $\mathcal{M}_n$.
			
			For $N\le N'$ the embedding is simply given by the map $\mathcal{M}_N\to\mathcal{M}_{N'}$, which is induced by the map $\mathbb{C}^N\to\mathbb{C}^{N'}$ that embeds $\mathbb{C}^N$ into the higher dimensional space by putting an appropriate amount of zeroes at the start.
		\end{proof}
		We can endow $X^{T}$ with a (commutative) semigroup structure by letting $p\cap q$ be the fixed point defined by the union of subsets
		\begin{equation*}
			I_{p\cap q} := I_p \cup I_q\subseteq\mathbb{Z}/n\mathbb{Z},
		\end{equation*}
		where $I_p,I_q\subseteq\mathbb{Z}/n\mathbb{Z}$ are the subsets corresponding to $p$ and $q$ respectively. Observe that for all $i\in\mathbb{Z}/n\mathbb{Z}$ we have
		\begin{equation}\label{CapOfPoints}
			(p\cap q)_i=\min(p_i,q_i),
		\end{equation}
		where $\min(p_i,q_i)$ is the point in $\proj{}{N-1}$ spanned by the vector that is lesser of vectors spanning $p_i$ and $q_i$.
		
		This construction lets us describe the intersections of the closed BB-cells
		\begin{stw}
			Let $p,q\in X^{T}$ be two fixed points. Then the following formula holds
			\begin{equation*}
				\clBB{p}\cap\clBB{q} = \clBB{p\cap q}.
			\end{equation*}
		\end{stw}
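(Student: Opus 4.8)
The plan is to deduce the formula from two facts already in hand: the BB-decomposition of $X$ is a stratification (as shown above), and Theorem $\ref{OrderOfBBCells}$, which converts inclusions of closed cells into the partial order on $X^T$. The crucial observation is that in the present single-block situation $p\cap q$ is precisely the \emph{join} of $p$ and $q$ in the poset $(X^T,\le)$. Indeed, by Example $\ref{SingleBlockRepresentation}$ the order on fixed points is $s\le s'\iff I_s\subseteq I_{s'}$, and since $I_{p\cap q}=I_p\cup I_q$ one sees at once that $p\le p\cap q$, $q\le p\cap q$, and that any $s$ with $p\le s$ and $q\le s$ satisfies $I_s\supseteq I_p\cup I_q=I_{p\cap q}$, i.e.\ $p\cap q\le s$. (One must note here that $I_p\cup I_q$ is again admissible — it is nonempty, and each arc between consecutive elements of $I_p\cup I_q$ sits inside such an arc for $I_p$, hence has length $<N$ — so that $p\cap q$ really is a point of $X^T$; this was part of the semigroup structure introduced above.)

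Next I would record the description of a closed cell as a union of open cells. Fix $r\in X^T$. From the cellular decomposition $X=\bigsqcup_{s\in X^T}\mathcal{C}_s$ of Proposition $\ref{BBCellsAffine}$ and the closedness of $\clBB{r}$ we get $\clBB{r}=\bigsqcup_{s\in X^T}(\mathcal{C}_s\cap\clBB{r})$. By the stratification property, each nonempty intersection $\mathcal{C}_s\cap\clBB{r}$ is all of $\mathcal{C}_s$; and $\mathcal{C}_s\subseteq\clBB{r}$ is equivalent to $\clBB{s}\subseteq\clBB{r}$ (pass to closures, using $\mathcal{C}_s\subseteq\clBB{s}$ and that $\clBB{r}$ is closed), which by Theorem $\ref{OrderOfBBCells}$ is equivalent to $r\le s$. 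Hence
\[
\clBB{r}=\bigsqcup_{s\in X^T,\ s\ge r}\mathcal{C}_s .
\]

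Applying this to $p$, $q$ and $p\cap q$, and using that distinct open cells are disjoint, one obtains
\[
\clBB{p}\cap\clBB{q}
=\bigsqcup_{\substack{s\ge p\\ s\ge q}}\mathcal{C}_s
=\bigsqcup_{s\ge p\cap q}\mathcal{C}_s
=\clBB{p\cap q},
\]
the middle equality being exactly the description of the common upper bounds of $\{p,q\}$ obtained in the first paragraph. There is no real obstacle here beyond the bookkeeping; the one substantive point is the identification of $p\cap q$ with the join — not the meet — of $p$ and $q$, which makes $\{s\in X^T:\clBB{s}\subseteq\clBB{p}\cap\clBB{q}\}$ an order filter generated by $p\cap q$. (Alternatively one could argue inside the ambient space $(\proj{}{N-1})^n$, where each closed BB-cell is a product of coordinate subspaces and two coordinate subspaces meet in the smaller one, after first checking that $\clBB{r}$ equals $X$ intersected with the corresponding closed cell of the ambient space; but the intrinsic argument above is shorter.)
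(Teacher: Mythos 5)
Your proof is correct, but it takes a genuinely different route from the paper's. The paper's argument is the one you only mention parenthetically at the end: it works coordinate-wise in the ambient space, observing that for each vertex the closed BB-cells of $\proj{}{N-1}$ form a chain of coordinate subspaces, so two of them meet in the smaller one, and then invokes formula $\eqref{CapOfPoints}$ to identify that subspace with the one attached to $(p\cap q)_i$. Your intrinsic argument instead assembles three previously established facts -- the stratification property, Theorem $\ref{OrderOfBBCells}$, and the identification of $p\cap q$ as the \emph{join} of $p$ and $q$ in $(X^T,\le)$ via $s\le s'\iff I_s\subseteq I_{s'}$ -- to get $\clBB{r}=\bigsqcup_{s\ge r}\mathcal{C}_s$ and finish by poset bookkeeping. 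What your route buys: it isolates the only structural inputs (a stratification whose closure order admits joins), so it would transfer to settings without a convenient ambient product of projective spaces; it also forces you to get the direction of the order right, and indeed your conclusion that $(p\cap q)_i$ is the \emph{larger} of $p_i,q_i$ is the one consistent with Definition $\ref{PartialOrder}$ and Theorem $\ref{OrderOfBBCells}$ (formula $\eqref{CapOfPoints}$ as printed, with $\min$, appears to have the direction reversed -- the closed cell of a larger basis vector is the smaller coordinate subspace). What the paper's route buys is brevity and independence from the stratification proposition. One small point to keep explicit in your write-up: the admissibility of $I_p\cup I_q$ (nonempty, gaps at most $N$), which you do address, is genuinely needed for $p\cap q$ to be a point of $X^T$ at all.
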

		\begin{proof}
			This is immediate when we consider BB-cells on $\proj{}{N-1}$ for fixed $i\in\mathbb{Z}$, since we have $\eqref{CapOfPoints}$.
		\end{proof}
	\end{przyk}
	\section{GKM structure}\label{GKMStructure}
	As we have already recalled in Section 3, the $T$-variety $X$ is a GKM variety. In the traditionally defined GKM graph the set of fixed points $X^T$ is the set of vertices and the set of one-dimensional orbits is the set of edges. This graph is not oriented, but we can orient it in our case using the $\mathbb{C}^\times$-action on $X$, as in \cite{GKMTheoryForCyclicQuiverGrassmannians}. We give a precise definiton below.
	\begin{defi}\label{Orientation}
		Let $X$ be a GKM variety with respect to a fixed $T$-action and ${\chi:\mathbb{C}^\times\to T}$ be a cocharacter such that the induced $\mathbb{C}^\times$-action on $X$ via $\chi$ has the same fixed point set
		\begin{equation*}
			X^T = X^{\mathbb{C}^\times}.
		\end{equation*}
		Let $x,y\in X^T$ be two fixed points such that they are connected by an edge, i.e. we have a one-dimensional orbit $O$ such that $x,y\in\overline{O}$. We orient this edge as $x\to y$ if for any (and thus all) $\nu\in O$ we have
		\begin{equation*}
			\lim_{\lambda\to0}\lambda\cdot \nu = x,
		\end{equation*}
		where $\lambda\in\mathbb{C}^\times$ acts on $\nu$ via $\chi$.
	\end{defi} 
	In particular if an arrow $p\to q$ exists, then for the orbit $O$ that induces this arrow we have $O\subseteq\mathcal{C}_p$. In \cite{GKMTheoryForCyclicQuiverGrassmannians} there is a description of one-dimensional orbits of any quiver Grassmannian endowed with their torus action and in our case this description becomes simpler.
	
	We can encode movable parts using Young tableaux. If $\mathcal{J}$ is the Young tableau of shape corresponding to the partition corresponding to $J$, then a movable part $S$ of length $m$ and from the $l$-th block is a coloring of $\mathcal{J}$, where we color in first $m+1$ boxes in the $l$-th row in $\mathcal{J}$ and leave the rest blank.
	\begin{przyk}\label{YoungTableauExample}
		Consider $J=J_4\oplus J_3\oplus J_3\oplus J_2\oplus J_1$ and a movable part of length $1$ in the second block. Its coloring is
		\ytableausetup{baseline,boxsize=1em}
		\begin{equation*}
			\begin{ytableau}
				\, & \, &  & \, \\
				*(gray) & *(gray) & \, \\
				\, & \, & \, \\
				\, & \, \\
				\,
			\end{ytableau}
		\end{equation*}
	\end{przyk}
	These colorings let us visualize fundamental mutations (defined in \cite[Definition 6.9]{GKMTheoryForCyclicQuiverGrassmannians}), i.e. edges in the GKM graph of $X$.
	\begin{defi}\label{MovablePartMutation}
		Let $S$ be a movable part of length $m$ in some fixed point and $\lambda$ be the coloring of $\mathcal{J}$ associated with $S$. A mutation of $S$ is either
		\begin{enumerate}
			\item A single coloring $\lambda'$, whose colored segment is the same length as in $\lambda$, but in a lower row, or
			\item Two colorings $\lambda_1,\lambda_2$, where $\lambda_1$ is a coloring of the same row and length $m'$ and $\lambda_2$ is a coloring of any row and first $m''$ elements so that
			\begin{equation*}
				m'+m''=m+1.
			\end{equation*}
			In other words, we split the colored boxes in $\lambda$ into two colorings that correspond to movable parts, with the restriction that at least one movable part must be from the same block as $\lambda$.
		\end{enumerate}
	\end{defi}
	\begin{przyk}
		Consider the movable part from Example $\ref{YoungTableauExample}$. Then there are two possible mutations of Type 1. Namely, we have
		\ytableausetup{baseline, boxsize=1em}
		\begin{align*}
			\begin{ytableau}
				\, & \, &  & \, \\
				\, & \,& \, \\
				*(gray) & *(gray) & \, \\
				\, & \, \\
				\,
			\end{ytableau} && \begin{ytableau}
				\, & \, &  & \, \\
				\, & \, & \, \\
				\, & \, & \, \\
				*(gray) & *(gray) \\
				\,
			\end{ytableau}
		\end{align*}
		There are also five possible mutations of Type 2. 
		\ytableausetup{baseline, boxsize=1em}
		\begin{gather*}
			\left( \, \begin{ytableau}
				\, & \, &  & \, \\
				*(gray) & \, & \, \\
				\, & \, & \, \\
				\, & \, \\
				\,
			\end{ytableau},\begin{ytableau}
				*(gray) & \, &  & \, \\
				\, & \,& \, \\
				\, & \, & \, \\
				\, & \, \\
				\,
			\end{ytableau} \, \right) , \left( \, \begin{ytableau}
				\, & \, &  & \, \\
				*(gray) & \, & \, \\
				\, & \, & \, \\
				\, & \, \\
				\,
			\end{ytableau},\begin{ytableau}
				\, & \, &  & \, \\
				*(gray) & \,& \, \\
				\, & \, & \, \\
				\, & \, \\
				\,
			\end{ytableau} \, \right) , \left( \, \begin{ytableau}
				\, & \, &  & \, \\
				*(gray) & \, & \, \\
				\, & \, & \, \\
				\, & \, \\
				\,
			\end{ytableau},\begin{ytableau}
				\, & \, &  & \, \\
				\, & \,& \, \\
				*(gray) & \, & \, \\
				\, & \, \\
				\,
			\end{ytableau} \, \right) \\
			\, \\
			\left( \, \begin{ytableau}
				\, & \, &  & \, \\
				*(gray) & \, & \, \\
				\, & \, & \, \\
				\, & \, \\
				\,
			\end{ytableau},\begin{ytableau}
				\, & \, &  & \, \\
				\, & \,& \, \\
				\, & \, & \, \\
				*(gray) & \, \\
				\,
			\end{ytableau} \, \right) , \left( \, \begin{ytableau}
				\, & \, &  & \, \\
				*(gray) & \, & \, \\
				\, & \, & \, \\
				\, & \, \\
				\,
			\end{ytableau},\begin{ytableau}
				\, & \, &  & \, \\
				\, & \,& \, \\
				\, & \, & \, \\
				\, & \, \\
				*(gray)
			\end{ytableau} \, \right)
		\end{gather*}
	\end{przyk}
	One can imagine the colored boxes as, for example, a stack of cards in a game of solitaire. Mutations correspond to either moving the whole stack "downwards" or making two stacks, but we can put the new stack wherever we want (we can put the stacks on top of each other in this scenario). These actions characterize exactly the edges of the GKM graph of $X$ as it is illustrated by the following proposition.
	\begin{stw}\label{GKM}
		Let $p,q\in X^T$ be two fixed points. Then an arrow $p\to q$ in the GKM graph of $X$ exists if and only if $q$ is obtained from $p$ by a mutation of exactly one movable part in $p$.
	\end{stw}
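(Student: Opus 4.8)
The plan is to reduce the statement to the description of one-dimensional orbits given in \cite{GKMTheoryForCyclicQuiverGrassmannians} and then translate their combinatorics of ``fundamental mutations'' into the solitaire-style picture of colored Young tableaux. First I would recall that by \cite[Theorem 6.13]{GKMTheoryForCyclicQuiverGrassmannians} the edges of the GKM graph of $X$ correspond exactly to the fundamental mutations of \cite[Definition 6.9]{GKMTheoryForCyclicQuiverGrassmannians}, which are described in terms of the coefficient quiver $Q(\mathcal{M})$: an edge changes exactly one connected component of the successor-closed subquiver defining $p$. Since each connected component of $S_p$ is by Definition $\ref{MovablePart}$ precisely a movable part, and each movable part is encoded as a coloring of the first $m+1$ boxes of a single row of $\mathcal{J}$, the task is to check that the operations allowed on a single component in \cite{GKMTheoryForCyclicQuiverGrassmannians} are exactly those in Definition $\ref{MovablePartMutation}$.

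The key steps, in order: (1) Fix $p\in X^T$ with movable parts $\sigma_1,\dots,\sigma_k$, and suppose $q\in X^T$ differs from $p$ by a single fundamental mutation, say affecting $\sigma_j$. (2) Analyze what a fundamental mutation of a single spiral segment can be: in $Q(\mathcal{M})$, a successor-closed segment of length $m$ in the $l$-th spiral can either slide to a longer spiral (a lower row of $\mathcal{J}$) keeping the same length $m$ — this is a Type 1 mutation — or break into two successor-closed segments whose lengths $m'$ and $m''$ satisfy $m'+m''=m+1$ (the $+1$ because breaking an $A$-type representation of total dimension $m+1$ at an interior vertex produces two pieces of dimensions summing to $m+1$), with at least one piece still in the $l$-th spiral — this is a Type 2 mutation. (3) Conversely, show every coloring-mutation in Definition $\ref{MovablePartMutation}$ yields a legitimate fixed point $q$ with $q$ obtained from $p$ by changing only the component $\sigma_j$, hence by \cite[Theorem 6.13]{GKMTheoryForCyclicQuiverGrassmannians} gives an edge $p\to q$. (4) Finally confirm the orientation: since an edge $p\to q$ in the oriented GKM graph forces $O\subseteq\mathcal{C}_p$ by Definition $\ref{Orientation}$, and the mutation moves weight strictly downward in the $\mathbb{C}^\times$-ordering (a lower row, or a split that frees up higher-weight boxes), the direction $p\to q$ is the one recorded — this also matches Theorem $\ref{OrderOfBBCells}$, since $q\le p$ whenever $\clBB{p}\supseteq\mathcal{C}_p\supseteq O$ and $q\in\overline O$.

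The main obstacle I expect is step (2): carefully matching the intrinsic ``fundamental mutation'' of \cite{GKMTheoryForCyclicQuiverGrassmannians}, which is phrased abstractly in terms of one-dimensional $T$-orbits and pairs of successor-closed subquivers differing minimally, with the concrete combinatorial moves on colorings, especially getting the bookkeeping $m'+m''=m+1$ right and verifying that the ``at least one piece stays in the same block'' restriction is exactly what the $T$-weight analysis of \cite{GKMTheoryForCyclicQuiverGrassmannians} imposes (it comes from the fact that the two segments sharing the original block contribute a relation that cuts the orbit down to dimension one, whereas two segments from unrelated blocks would give a higher-dimensional orbit). The rest is essentially unwinding definitions: once the dictionary between connected components of $S_p$ and colored rows of $\mathcal{J}$ is fixed, the equivalence is a direct comparison, and one only needs to note that a mutation of one movable part automatically leaves the others — hence the corresponding boxes of $\mathcal{J}$ — untouched, so $q$ is well-defined as a fixed point and genuinely adjacent to $p$.
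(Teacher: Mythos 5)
Your route is genuinely different from the paper's. You propose to quote the classification of one‑dimensional orbits from \cite[Theorem 6.13]{GKMTheoryForCyclicQuiverGrassmannians} (edges $=$ fundamental mutations of successor‑closed subquivers) and then build a dictionary between fundamental mutations of a single spiral segment and the two coloring moves of Definition \ref{MovablePartMutation}. The paper instead reproves the orbit classification from scratch in this special case: for sufficiency it exhibits the orbit explicitly as $T\cdot(p+q)$ with $(p+q)_i=\langle v_i^l+v_j^s\rangle$ and checks that its limits at $0$ and $\infty$ are $p$ and $q$; for necessity it uses the parametrization of $\mathcal{C}_p$ from Proposition \ref{BBCellsAffine} to see that a one‑dimensional orbit inside $\mathcal{C}_p$ is determined by its point of $\proj{}{N-1}$ over the starting vertex of a single movable part, which must be supported on exactly two basis vectors $v_{j_l-m}^l$ and $v_k^s$, and then reads off the two mutation types according to whether the $s$-component survives to the end of the movable part ($j_s-k=m$, Type 1, forcing $l<s$) or dies earlier (Type 2, forcing the tail of $q$ back into row $l$). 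Your approach buys brevity by outsourcing the orbit analysis, but the crux — your step (2), matching the abstract fundamental mutations to the colorings, including the constraint that one piece of a split stays in the original block — is exactly where the paper's two‑term‑support computation lives; your ``higher‑dimensional orbit'' heuristic is the right idea, but to make it a proof you still need the observation that every later vertex of $q$ is then a $\lambda\to\infty$ limit of a subset of $\{v_{j_l-m+r}^l,v_{k+r}^s\}$, so the surviving tail is necessarily in row $l$. In that sense the two proofs end up doing the same local computation; the paper just does it without appealing to the external theorem.

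Two slips to correct if you write this up. First, a Type 1 mutation moves the segment to a \emph{lower} row of $\mathcal{J}$, which (rows being sorted by decreasing length) is a \emph{shorter or equal} spiral, not a longer one. Second, the order relation is reversed: from $q\in\overline{O}\subseteq\clBB{p}$ and Theorem \ref{OrderOfBBCells} one gets $p\le q$, not $q\le p$ — the $\lambda\to 0$ limit $p$ is the \emph{minimal}-weight point of the orbit closure, so a mutation moves weight \emph{upward}, consistently with $p_i\le q_i$ at every vertex.
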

	Before the proof we shall give an example of a GKM graph.
	\begin{przyk}\label{GKMGraphn=3}
		Let $J$ be a single Jordan block of size $3$ and suppose we have $n=3$ vertices. We will denote the fixed points by the subsets of $\{0,1,2\}$ that they define. Mutations in this case can only correspond to the action of dividing movable parts into two, which on the level of subsets correspond to adding a single element to the subset defining a fixed point. The GKM graph of $X$ in this case turns out to be
		\begin{equation*}
			\resizebox{0.9\textwidth}{!}{
				\begin{tikzcd}[ampersand replacement=\&]
					\&\&\&\&\& {\{2\}} \\
					\\
					\&\&\& {\{0,2\}} \&\&\&\& {\{1,2\}} \\
					\&\&\&\&\& {\{0,1,2\}} \\
					\\
					{\{0\}} \&\&\&\&\& {\{0,1\}} \&\&\&\&\& {\{1\}}
					\arrow["{t_2+2t_0-t_1}", from=1-6, to=3-4,sloped]
					\arrow["{t_1+t_0-t_2}", from=6-1, to=3-4,sloped]
					\arrow["{t_3+2t_0-t_2}"', from=6-1, to=6-6,sloped]
					\arrow["{t_3+t_0-t_1}", from=1-6, to=3-8,sloped]
					\arrow["{t_1+2t_0-t_3}" , from=6-11, to=3-8,sloped]
					\arrow["{t_2+t_0-t_3}"', from=6-11, to=6-6,sloped]
					\arrow["{t_1+t_0-t_2}"', from=6-6, to=4-6,sloped]
					\arrow["{t_3+t_0-t_1}"', from=3-4, to=4-6,sloped]
					\arrow["{t_2+t_0-t_3}", from=3-8, to=4-6,sloped]
				\end{tikzcd}
			}
		\end{equation*}
	\end{przyk}
	\begin{proof}[Proof of Proposition $\ref{GKM}$]
		For two fixed points $p,q\in X^T$ let $p+q$ be the subrepresentation given by
		\begin{equation*}
			(p+q)_i = \langle v_{i}^l+v_j^s \rangle,
		\end{equation*}
		where $i\in\mathbb{Z}/n\mathbb{Z}, p_i=\langle v_i^l\rangle$ and $q_i=\langle v_j^s\rangle$.
		
		It is easy to see that if we assume the conditions on $I_p$ and $I_q$, then $p_i=q_i$ for all vertices besides those that correspond to the movable part in $p$ that gets mutated to create $q$. On this movable part we have $p_i<q_i$. To see the arrow $p\to q$, we just take the orbit $T\cdot(p+q)$. The representation $p+q$ has one non-zero coordinate everywhere besides one movable part in $p$, so the orbit of this element is one-dimensional and 
		\begin{align*}
			&\lambda\cdot (p+q) \xrightarrow[]{\lambda\to 0}p, 
			&\lambda\cdot(p+q) \xrightarrow[]{\lambda\to\infty}q.
		\end{align*}
		Now suppose that there exists an orbit $O\subseteq X$ such that $p,q\in\overline{O}$ and $O\subseteq\mathcal{C}_p$. As for any $\nu\in O$ we have $\lambda\cdot \nu\xrightarrow[]{\lambda\to\infty}q$, then we must have $\clBB{q}\subseteq\clBB{p}$ and by Theorem $\ref{OrderOfBBCells}$ we have $p_i\le q_i$ for all $i$. Let us pick an element $\nu\in O\subseteq \mathcal{C}_p$. By the explicit description of the BB-cells in Proposition $\ref{BBCellsAffine}$ we know that $\nu$ is determined by vector spaces put over the vertices which are starting movable parts in $p$. As the orbit $O=T\cdot \nu$ has to be one-dimensional, then we can only have one movable part in $p$ over which $q$ differs from $p$, since we would have an orbit of larger dimension otherwise. We can therefore assume that $p$ is a single movable part, say starting at the $i$-th vertex. The whole $T$-orbit is purely determined by what happens at the $i$-th vertex, since lines in other vertices are just images by $J$. This reduces the case to $\proj{}{N-1}$. We know the $1$-dimensional orbits there -- in the GKM graph of $\proj{}{N-1}$ under the induced torus action an edge $p_i\to q_i$ exists if and only if $p_i< q_i$. Let $p_i=\langle v_j^l\rangle, q_i=\langle v_k^s\rangle$ and $\lambda_p,\lambda_q$ be colorings of $\mathcal{J}$ associated to $p$ and $q$ respectively. We can distinguish two cases
		\begin{enumerate}
			\item If $j=k$, then we must have $l<s$. In this case $\lambda_q$ came from $\lambda_p$ by moving the whole colored row in $\lambda_p$ down.
			\item If $j<k$, then $\lambda_q$ came about from dividing the colored row in $\lambda_p$ into two parts. We split this row at the $(i+j_s-j)$-th vertex.
		\end{enumerate}
	\end{proof}
	We also need to describe the labels of the edges of the GKM graph, which is done below.
	\begin{stw}\label{GKMedges}
		Suppose that in the GKM graph of $X$ there exists an edge $p\to q$ between two fixed points $p,q\in X^T$. Then the label of this edge can be chosen to be the following polynomial
		\begin{equation*}
			w(p,q) = t_{m+j,l} + (j_l-j_k+i-m)t_0 - t_{i+j,l},
		\end{equation*}
		where $q$ comes by mutating a movable part in $p$ that starts at the $j$-th vertex, is of length $m$ and comes from the $l$-th block. The augmentation occurs at the $(i+j)$-th vertex.
	\end{stw}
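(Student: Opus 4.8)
The plan is to compute the weight of the one-dimensional orbit $O$ that induces the edge $p \to q$ by tracking the $T$-action on the representative $p+q$ introduced in the proof of Proposition~$\ref{GKM}$. Recall from that proof that the orbit is $O = T\cdot(p+q)$ where $(p+q)_i = \langle v_i^l + v_j^s\rangle$, and that $O$ is one-dimensional precisely because $p$ and $q$ agree outside the single mutated movable part. The label of the edge is, by the GKM formalism, the $T$-character by which $T$ acts on the tangent line to $O$ at the fixed point $p$ — equivalently, the difference of the two characters by which $T$ scales the two basis vectors that appear in the representative line at the vertex where $p$ and $q$ differ. So the whole computation localizes to a single vertex and is essentially the edge-label computation for $\proj{}{N-1}$ under the induced torus action.

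**First** I would set up coordinates: let the mutated movable part of $p$ start at vertex $j$ (in the indexing where it is "$j$-th vertex"), have length $m$, and come from the $l$-th block, so that over vertex $j$ we have $p_j = \langle v_m^l\rangle$ — here I need to be careful to match the paper's convention that a movable part of length $m$ from block $l$ starting at a given vertex corresponds to the basis vectors $v_{j_l-m}^l,\dots,v_{j_l}^l$, so the vector spanning $p$ at the starting vertex of the segment is $v_{j_l-m}^l$. The mutation augments at the $(i+j)$-th vertex, meaning $q$ differs from $p$ by replacing the relevant basis vector at that vertex with a larger one in the twisted lexicographic order. Then I apply the explicit $T$-action formula from Section~$\ref{TorusActions}$: a vector $v_a^b$ sitting over vertex $c$ is scaled by $t_0^{a-1} t_{j_b - a + c, b}$. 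Plugging in the two basis vectors that span the line of the orbit representative at the relevant vertex, the edge label is the difference of their $T$-weights — one weight giving the term $t_{i+j,l}$ (with a minus sign, since this is the one being scaled "up" as $\lambda\to\infty$ toward $q$) and the other giving $t_{m+j,l}$, while the $t_0$-exponents differ by exactly $(j_l - j_k + i - m)$, where $j_k$ is the size of the block the augmented part lands in.

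**The key steps** in order: (1) fix the vertex $i+j$ where the augmentation occurs and identify the two basis vectors $v_a^l$ and $v_{a'}^k$ that span $p$ and $q$ respectively at that vertex, reading their indices off from the movable-part/coloring data $(j, m, l)$ and the mutation type; (2) apply the $T$-weight formula $v_a^b \mapsto t_0^{a-1}t_{j_b - a + c, b}$ at $c = i+j$ to each of these two vectors; (3) subtract to get $w(p,q) = \mathrm{weight}(v_a^l) - \mathrm{weight}(v_{a'}^k)$, checking the sign convention via the limits $\lambda\cdot(p+q)\to p$ as $\lambda\to 0$ and $\to q$ as $\lambda\to\infty$ established in Proposition~$\ref{GKM}$; (4) simplify the $t_0$-exponent to $j_l - j_k + i - m$, and confirm the $t$-subscripts are $m+j$ and $i+j$, both in the $l$-th family (the $l$-family appears in both terms because at least one of the two split parts in a Type-2 mutation, and the whole part in a Type-1 mutation, stays in block $l$; in the Type-1 case one reindexes the lower row — this is where $j_k$ enters).

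**The main obstacle** I anticipate is bookkeeping: reconciling the three indexing conventions in play — the Jordan-basis index $i$ on $v_i^l$, the "length $m$" convention for movable parts, and the vertex labels shifted by $\rho^j$ — so that the final formula comes out with exactly the subscripts $m+j$ and $i+j$ and exponent $j_l - j_k + i - m$ rather than some off-by-one variant. In particular, one must correctly handle the two mutation types separately (Type 1, where $j=k$ in the sense that the whole segment drops to block $k$ with $j_k < j_l$; Type 2, where the segment splits and $j_k$ records which block the augmenting vector lands in) and verify that both produce the same closed form. Once the vertex is fixed, though, there is no geometry left — it is a direct substitution into the $T$-action formula — so the proof is short and the only real content is making the index arithmetic unambiguous, ideally by reusing the notation and the $p+q$ representative exactly as set up in the proof of Proposition~$\ref{GKM}$.
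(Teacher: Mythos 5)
Your proposal is essentially the paper's own argument: identify the closed orbit $\overline{T\cdot(p+q)}$ with a $\proj{}{1}$, read off the two $T$-characters scaling the basis vectors that span the representative line at one vertex of the mutated movable part via the explicit formula $v_a^b\mapsto t_0^{a-1}t_{j_b-a+c,b}$, take their difference with the sign fixed by the $\lambda\to 0$ limit, and translate by $j$. The only cosmetic difference is that the paper evaluates at the starting vertex of the movable part while you evaluate at the augmentation vertex $i+j$; since moving one vertex forward shifts both weights by $t_0$, the difference is the same, so this changes nothing.
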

	\begin{proof}
		We need the weight of the $T$-action on which we act on the $\proj{}{1}$ that is equiavariantly isomorphic to the closed orbit $\overline{T\cdot(p+q)}$. The isomorphism $\overline{T\cdot(p+q)}\to\proj{}{1}$ is given by looking at the vector space lying over the starting point of the movable part in $p$ which we augment to obtain $q$. 
		
		Suppose first that this movable part in $p$ starts at the $0$-th vertex, is of length $m$ and comes from the $l$-th block; in particular $p_i=\langle v_{j_l-m}^l\rangle$. Suppose further that $q_i=\langle v_{j_k-i}^k$, i.e. the augmentation occurs at the $i$-th vertex and at one of the movable parts is from the $k$-th block. The torus acts on these vectors with weights
		\begin{align*}
			& w_1 = t_0^{j_l-m-1}t_{m,l},\\
			& w_2 = t_0^{j_k-i-1}t_{i,l}.
		\end{align*}
		So the torus acts on the closed orbit identified with $\proj{}{1}$ with weight
		\begin{equation*}
			w = t_{m,l} + (j_l-j_k+i-m)t_0 - t_{i,l}.
		\end{equation*}
		The above polynomial is also a weight in the torus representation $T_q\clBB{p}$. Now, if the movable part in $p$ that we are augmenting starts at the $j$-th vertex, then we have to translate everything by $j$:
		\begin{equation*}
			w(p,q) = t_{m+j,l} + (j_l-j_k+i-m)t_0 - t_{i+j,l}.
		\end{equation*}
		This is the label of the edge $p\to q$ in the GKM graph of $X$ and a weight in the torus representation $T_q\clBB{p}$.
	\end{proof}
	\begin{uw}\label{CyclicAction}
		There is a natural $(\mathbb{Z}/n\mathbb{Z})$-action on $X$, which comes from rotating the quiver along the direction given by the arrows in $\Delta_n$. Since the $\mathbb{C}^\times$-action defined by Formula $\eqref{C-action}$ is independent of the vertex over which we define this action, then the $\mathbb{Z}/n\mathbb{Z}$ is immediately seen to lift to the set of fixed point $X^T$ as well as the set of (closed) BB-cells. The $T$-action on $X$ commutes with the $(\mathbb{Z}/n\mathbb{Z})$-action up to a twist by $\rho:T\to T$ and so, similarly as is the case with Schubert classes in the usual Grassmannians, this gives a particular symmetry on the GKM-graph of $X$. Namely, if $\alpha\in H_T^*(X)$ is a class, then for the generator $\tau\in\mathbb{Z}/n\mathbb{Z}$ we can set
		\begin{equation}\label{Z/nZ-ActionOnGraph}
			(\tau\cdot\alpha)|_p = \alpha|_{\tau(p)}\circ \rho,
		\end{equation}
		where $\rho:H_T^*(*)\to H_T^*(*)$ is the map induced by $\rho:T\to T$ defined by $\eqref{Rotation}$.
	\end{uw}
	With this section concluded we are ready to delve into the analysis of the multiplcative structure of $H_T^*(X)$.
	\section{Multiplicative structure of equivariant cohomology}\label{MultiplicativeStructure}
	This last section is devoted to analyzing the multiplicative structure of the equivariant cohomology ring $H_T^*(X)$. M. Lanini and A. P{\"u}tz provide a construction of the so called Knutson-Tao basis for equivaraint cohomology \cite[Theorem 3.9]{PermutationActionsOnQuiverGrassmannians} for certain quiver Grassmannians. This basis is dependent on a chosen orientation of the GKM-graph of $X$ using cocharacters as we recalled in Definition $\ref{Orientation}$. We give a precise definition below
	\begin{defi}[{\cite[Definition 2.12]{PermutationRepresentationsOnSchubertVarieties}}]\label{Knutson-TaoBasis}
		Let $X$ be any GKM variety under a fixed $T$-action and $\chi:\mathbb{C}^\times\to T$ be a cocharacter such that the induced $\mathbb{C}^\times$-action by $\chi$ preserves fixed points $X^T=X^{\mathbb{C}^\times}$. This induced $\mathbb{C}^\times$-action determines an orientation on the GKM graph of $X$ as in Definition $\ref{Orientation}$. A set $\{q^x:x\in X^T\}$ is called a \textbf{Knutson-Tao basis} if its elements satisfy the following
		\begin{enumerate}
			\item For each $y\in X^T$ the polynomial $q^x|_y$ is either $0$ in the case that there is no oriented path from $y$ to $x$ or is homogeneous of degree $\deg(q^x|_x)$ in the case where there is such a path.
			\item The restriction $q^x|_x$ is the product of weights associated to edges going from $x$ in the GKM-graph of $X$.
		\end{enumerate}
	\end{defi}
	If a Knutson-Tao basis exists, then it is a basis of $H_T^*(X)$ as an $H_T^*(*)$-module \cite[Proposition 2.13]{PermutationRepresentationsOnSchubertVarieties}. If our GKM variety $X$ is Palais-Smale, meaning that $X$ satisfies hypothesis of Definition $\ref{Knutson-TaoBasis}$ and for each edge $x\to y$ in the GKM graph of $X$ the amount of edges going out of $x$ is strictly higher than the amount of edges going out of $y$, then this basis is unique \cite[Lemma 2.16]{PermutationRepresentationsOnSchubertVarieties}. 
	
	This basis behaves well with respect to the order on $X^T$ given by paths in the graph. In our case this order coincides with the inverted lexicographic order. These basis' arose from the Schubert basis for equivariant cohomology of the usual Grassmannians in \cite{PuzzlesAndEquivariantCohomologyOfGrassmannians}. Similarly as for flag varieties and usual Grassmannians, a central question regarding these basis is how they multiply. Since $\{q^x:x\in X^T\}$ is a basis of $H_T^*(X)$ as a $H_T^*(*)$-module, then computation of the structure constants
	\begin{equation*}
		q^x \cdot q^y = \sum_{z\in X^T}c_{x,y}^zq^z
	\end{equation*}
	determines the multiplicative structure of $H_T^*(X)$, considered as a $H_T^*(*)$-algebra. 
	
	In our case each cell $\clBB{x}$ gives an equivariant fundamental class $[\clBB{x}]\in H_{*}^T(X)$ and the set of fundamental classes is a basis of equivariant homology. By Theorem $\ref{ClBBCellsSmooth}$ we know the closed BB cells are smooth and so integration along such a cell is much simpler to grasp. Using this pairing we can get the dual basis $\{p^x\in H_T^*(X):x\in X^T\}$, i.e. characterized by
	\begin{equation*}
		\int_{\clBB{y}}p^x|_{\clBB{y}} = \delta_{x,y}.
	\end{equation*}
	Using Atiyah-Bott, Berline-Vergne we can further rewrite the above formula as
	\begin{equation*}
		\int_{\clBB{y}}p^x|_{\clBB{y}} = \sum_{z\in X^T:z\in\clBB{y}}\frac{p^x|_z}{e(T_z\clBB{y})} = \delta_{x,y},
	\end{equation*}
	where $e(T_z\clBB{y})$ is the product of weights of the torus representation $T_z\clBB{y}$. These products of weights can be read directly from the GKM graph of $X$. Namely, for a fixed point $x\in X^T$ we first consider the subgraph of $X$ spanned by all oriented paths starting from $x$. For any $y\in X^T$ such that $y\in\clBB{x}$ we have that the vertex corresponding to $y$ lies in this subgraph. To get the Euler class of the torus representation $T_y\clBB{x}$ we take the product of labels of edges either from or into $y$ in this subgraph. However, one has to be careful with the sign of these labels -- normally, labels of edges in the GKM graph are defined up to a scalar multiple and since we need weights of the torus representation $T_y\clBB{x}$, then one has to take the appropriately scaled generator of the ideal that generates labels of edges in the GKM graph of $X$; in Proposition $\ref{GKMedges}$ we give the appropriately scaled labels.
	\begin{przyk}[{\cite[Example 5.6]{TotallyNonnegativeGrassmanniansGrassmannNecklacesAndQuiverGrassmannians}}]
		Take $n=3$ and $J=J_3$. Then there are $7$ fixed points associated to non-empty subsets of $\{0,1,2\}$. The GKM graph of $X$ in this case is given in Example $\ref{GKMGraphn=3}$. The Knutson-Tau basis of $H_T^*(X)$ consists of three types of elements:
		\begin{enumerate}
			\item The identity $1\in H_T^*(X)$, dual to the smallest cell $\clBB{\{0,1,2\}}$.
			\item Three elements dual to the medium-sized cells, i.e. those corresponding to two-element subsets
			\begin{align*}
				& p^{\{0,1\}}=(t_1+t_0-t_2,t_1+2t_0-t_3,0,t_1+t_0-t_2,0,0,0),\\
				& p^{\{1,2\}}=(0,t_2+t_0-t_3,t_2+2t_0-t_1,0,t_2+t_2-t_3,0,0),\\
				& p^{\{0,2\}}=(t_3+2t_0-t_2,0,t_3+t_0-t_1,0,0,t_3+t_0-t_1,0),
			\end{align*}
			where the order follows the following order on fixed points
			\begin{equation*}
				(\{0\},\{1\},\{2\},\{0,1\},\{1,2\},\{0,2\},\{1,2,3\}).
			\end{equation*}
			\item Three elements dual to the largest cells:
			\begin{align*}
				& p^{\{0\}}=((t_2-2t_0-t_3)(t_2-t_0-t_1),0,\dots,0),\\
				& p^{\{1\}}=(0,(t_3-2t_0-t_1)(t_3-t_0-t_2),0,\dots,0),\\
				& p^{\{2\}}=(0,0,(t_1-2t_0-t_2)(t_1-t_0-t_3),0,\dots,0).
			\end{align*}
		\end{enumerate}
	\end{przyk}
	From simple linear algebra one can immediately observe that this dual basis always is a Knutson-Tao basis. Authors in \cite[Theorem 3.22]{PermutationActionsOnQuiverGrassmannians} define a family of quiver Grassmannians for which a Knutson-Tao basis exists (and is unique); our GKM space $X$ is a member of this family.
	
	Elements of this dual basis are homogeneous (we grade $H_T^*(X)$ in the usual fashion, i.e. $\deg t_0=\deg t_{i,j}=2$ for all $i,j$); degree of $p^x$ is given by the dimension of the cell it corresponds to
	\begin{equation*}
		\deg(p^x) = 2\dim(\clBB{x}).
	\end{equation*}
	By definition of the dual basis $p^x$, the structure constants are given by integrals
	\begin{equation*}
		c_{x,y}^z = \int_{\clBB{z}}p^x|_{\clBB{z}}\cdot p^y|_{\clBB{z}} = \sum_{s\in X^T: \clBB{x}\cup\clBB{y}\subseteq\clBB{s}\subseteq \clBB{z}}\frac{p^x|_s\cdot p^y|_s}{e(T_s\clBB{z})},
	\end{equation*}
	We can outline that the basis $\{p^x:x\in X^T\}$ satisfies the same symmetry as the closed BB-cells that these classes are dual to, i.e. if $\tau\in\mathbb{Z}/n\mathbb{Z}$ is the generator, then
	\begin{equation*}
		p^{\tau(x)} = \tau(p^x),
	\end{equation*}
	where $\tau$ acts on fixed point by rotation of the quiver and on $H_T^*(X)$ by Formula $\eqref{Z/nZ-ActionOnGraph}$.
    \printbibliography 
\end{document}